\theoremstyle{plain}
\newtheorem{thm}{Theorem}[section]
\newtheorem{prop}[thm]{Proposition}
\theoremstyle{definition}
\newtheorem{ex}[thm]{Example}
\def\R{\mathbb{R}}
\def\N{\mathbb{N}}
\def\I{\infty}
\newcommand{\be}{\begin{equation}}
\newcommand{\ee}{\end{equation}}
\newcommand{\bea}{\begin{eqnarray}}
\newcommand{\eea}{\end{eqnarray}}
\newcommand{\beann}{\begin{eqnarray*}}
\newcommand{\eeann}{\end{eqnarray*}}
\newcommand{\benn}{\begin{equation*}}
\newcommand{\eenn}{\end{equation*}}
\def\ra{\rightarrow}
\def\I{\infty}
\newcommand{\cA}{{\mathcal A}}  
\newcommand{\cB}{{\mathcal B}}  
\newcommand{\cD}{{\mathcal D}}  
\newcommand{\cE}{{\mathcal E}}  
\begin{document}
 
\title{Dynamical analysis of evolution equations\\ in generalized models}
\author{Christian Kuehn\thanks{Max Planck Institute for Physics of Complex Systems, 01187 Dresden, Germany \& Center for Dynamics Dresden [CfD]},~ Stefan Siegmund\thanks{Department of Mathematics, TU Dresden, 01062 Dresden, Germany \& Center for Dynamics Dresden [CfD]}~ and Thilo Gross\thanks{Max Planck Institute for Physics of Complex Systems, 01187 Dresden, Germany \& Center for Dynamics Dresden [CfD]}}

\maketitle

\begin{abstract}
Generalized models provide a framework for the study of evolution equations without specifying all functional forms. The generalized formulation of problems has been shown to facilitate the analytical investigation of local dynamics and has been used successfully to answer applied questions. Yet their potential to facilitate analytical computations has not been realized in the mathematical literature. In the present paper we introduce the method of generalized modeling in mathematical terms, supporting the key steps of the procedure by rigorous proofs. Further, we point out open questions that are in the scope of present mathematical research and, if answered could greatly increase the predictive power of generalized models. 
\end{abstract}

{\bf Keywords:} Generalized models, evolution equations, bifurcations, scaling transformation.

\section{Introduction}

Many processes observed in nature are too complex to be described on a detailed mechanistic level. Therefore mathematical modeling can often not provide an exact set of evolution equations. This particularly evident in the context of mathematical biology \cite{Guckenheimer14}:\\

\textit{``The Hodgkin-Huxley models are based upon sound biophysical principles, but these principles do not constrain the models to a definite set of equations [$\ldots$].''}\\

Therefore we always seem to face the dilemma that a dynamical analysis requires a given specific model. However, once we specify some of the functions, that are only partially or not at all known, then we cannot provide a result that is valid for the underlying physical, chemical or biological process in full generality.\\ 

One possibility to address this problem is to introduce a wide variety of parameters or even phase-space variables in an ad-hoc way. A more systematic approach is provided by the theory of S-systems \cite{SavageauVoit,VoitSavageau} that aims at grouping different terms in evolution equations. Another systematic approach is considered in metabolic control theory \cite{KacserBurns,HeinrichRapoportRapoport,Reder} where a linearized analysis for a dynamical system uses sensitivities as a standard set of parameters.\\
  
Generalized modeling applies normalizing coordinate transformations to a system grouped into gain and loss terms to obtain a systematic parametrization. A generalized model provides an intermediate alternative enabling the mathematical modeler to use the partial information he has available but still provides enough flexibility to treat many different alternative models simultaneously. 

Sections \ref{sec:applications}-\ref{sec:predator-prey} form an extended introduction to readers not familiar with generalized modeling whereas we focus on new results in Sections \ref{sec:gm}-\ref{sec:beyond}. In Section \ref{sec:applications} we review several results obtained in applications to show what conclusions can be drawn from a generalized model. For illustrating the application of the method in practice we briefly consider an example of a planar predator-prey model in Section \ref{sec:predator-prey}. 

In Section \ref{sec:gm} we provide systematic treatment of generalized models for arbitrary ordinary differential equations (ODEs) in $\R^n$. We analyze the normalizing transformations for generalized models in detail, supporting this key step of the procedure by mathematical proofs. We also prove results on the number of parameters and discuss the role of positivity assumptions. This approach provides a standard scheme for the application of generalized models under minimal mathematical assumptions.  

In Section \ref{sec:bifurcations} we answer several mathematical questions that arise from the framework of generalized modeling in the context of results and methods from bifurcation theory. The focus of this analysis is on genericity and the structure of bifurcation diagrams. We explain via several instructive examples for non-degeneracy conditions what information can or cannot be inferred from a bifurcation analysis of a generalized model.

In Section \ref{sec:beyond} we show that generalized models can also be applied to homogeneous steady state dynamics of a wide variety of other evolution equations including delay, partial and stochastic differential equations. We also indicate some recent progress on extending generalized modeling to nonlocal dynamics for the case of periodic orbits.

\section{Results in Applications}
\label{sec:applications}
Generalized modeling was originally proposed in the context of community ecology \cite{GrossFeudel}. 
Only subsequently it was recognized as a general approach \cite{GrossFeudel1} and applied to a wide verity of applications.
A comprehensive list of publications is \cite{GrossFeudel,GrossFeudel1,GrossRudolfLevinDieckmann,BaurmannGrossFeudel,
StiefsvanVoornKooiFeudelGross,ZumsandeGross,StiefsGrossSteuerFeudel,
vanVoornStiefsGrossKooiFeudelKooijman,GrossEbenhoehFeudel,GrossEbenhoehFeudel1,StiefsVenturinoFeudel,
GrossBaurmannFeudelBlasius,HoefenerSethiaGross,Steueretal,SteuerGrossSelbigBlasius,
YeakelStiefsNovakGross,ReznikSegre,AufderheideRudolfGross,GehrmannDrossel,ZumsandeStiefsSiegmundGross}.
In these applications generalized models for instance revealed essential factors for the stability of food webs
\cite{GrossRudolfLevinDieckmann}; 
resolved a discrepancy between different modeling approaches in ecology \cite{StiefsvanVoornKooiFeudelGross};
implicated a Hopf bifurcation as a cause of Paget's disease in humans \cite{ZumsandeStiefsSiegmundGross};
and provided insights in the stability of mitochondrial metabolism \cite{Steueretal}.

Besides the ability of generalized modeling to deal with unspecified relationships, the success of the approach builds
mainly on making the Jacobian matrix analytically accessible. 
As shown in more detail below, the Jacobians obtained form generalized models are given in explicitly and typically contain only simple functions of the parameters. 
In systems of small and intermediate size the bifurcations of a generalized model can thus often be computed explicitly by hand. 

For exploring larger generalized models one typically randomly samples the local stability in random points of the parameter space to build up a database that is subsequently explored by machine learning techniques. 
Because of the direct accessibility of the Jacobian matrix the evaluation of every sample point involves only the computation of the leading eigenvalue of a matrix. 
Because of the numerical efficiency of this computation the exploration of large parameter spaces becomes feasible.
For instance in \cite{GrossRudolfLevinDieckmann} a system containing 50 dynamical variables and 
thousands of unknown parameters was explored by 100 billion ($10^{11}$) samples, which were obtained in reasonable numerical time.   
Generalized modeling is thus one of very few approaches that has reasonable hope of scaling for instance to whole-organism models 
in systems biology. 

\begin{figure}[htbp]
\centering
 \includegraphics[width=0.7\textwidth]{./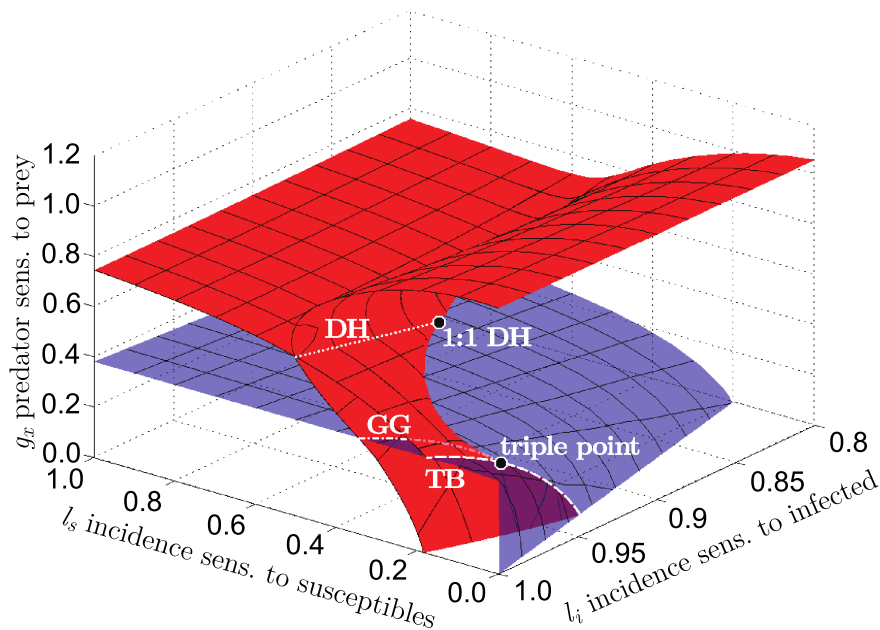}
\caption{\label{fig:Stiefs}Bifurcation diagram in generalized parameter space. Re-printed with permission from \cite{Stiefs}; see also \cite{StiefsVenturinoFeudel}. The red surface indicates Hopf bifurcation and the blue surface saddle-node bifurcations. Higher co-dimension bifurcations are indicated by separate labels. The main codimension two curves are Gavrilov-Guckenheimer (GG), Takens-Bogdanov (TB) and double-Hopf (DH) bifurcations.}
\end{figure}

A major limitation of generalized modeling -- the restriction to dynamics close to equilibria -- is so far mitigated in applications,
by the analysis of local bifurcations of higher codimension. These bifurcations allow some conclusions on global dynamical properties
and can be computed analytically in generalized models up to intermediate size and numerically beyond that.  
Figure \ref{fig:Stiefs} shows an example of the information that is obtained. The three-dimensional bifurcation diagram shows codimension-one bifurcation surfaces (fold and Hopf) as well as codimension-two curves (Gavrilov-Guckenheimer, Takens-Bogdanov and double-Hopf) and codimension three points ($1:1$ resonance, triple point \cite{Kuznetsov,Govaerts}). Although not all the unfoldings of the higher-codimension bifurcations are known it often suffices to detect the bifurcation point as an organizing center to gain insight into the overall dynamics. For example, parts of the double-Hopf bifurcation are known to generate chaotic dynamics due to associated torus and homoclinic bifurcations \cite{GH}.

\section{A Predator-Prey Example}
\label{sec:predator-prey}

To illustrate the basic steps in generalized modeling we apply it in a \emph{non-rigorous} way to a planar predator-prey system \cite{GrossFeudel1} with a prey density $X$ and a predator density $Y$. The prey population grows at a rate $S(X)$, predation occurs at rate $G(X,Y)$ and natural mortality of the predator at rate $M(Y)$ which yields
\be
\label{eq:gm_local}
\begin{array}{lcl}
X'&=& S(X)-G(X,Y),\\
Y'&=& G(X,Y)-M(Y).\\
\end{array}
\ee 
Suppose \eqref{eq:gm_local} admits an equilibrium point $(X,Y)=(X^*,Y^*)$ and introduce normalizing coordinates
\be
\label{eq:gm_local_normalize}
x:=\frac{X}{X^*}\qquad \text{and}\qquad y:=\frac{Y}{Y^*}.
\ee
moving the equilibrium to $(x,y)=(1,1)$. Then we normalize the rate functions
\be
\label{eq:gm_local_rates}
s(x):=\frac{S(X^*x)}{S(X^*)},\qquad g(x,y):=\frac{G(X^*x,Y^*,y)}{G(X^*,Y^*)}, \qquad m(y):=\frac{M(Y^*y)}{M(Y^*)}. 
\ee
Direct substitution of \eqref{eq:gm_local_normalize}-\eqref{eq:gm_local_rates} into \eqref{eq:gm_local} gives 
\be
\label{eq:gm_local1}
\begin{array}{lcl}
x'&=& \frac{S(X^*)}{X^*}s(x)-\frac{G(X^*,Y^*)}{X^*}g(x,y),\\
y'&=& \frac{G(X^*,Y^*)}{Y^*}g(x,y)-\frac{M(Y^*)}{Y^*}m(y),\\
\end{array}
\ee 
where we define new parameters
\be
\label{eq:gm_local_scale_ps}
\beta_s:=\frac{S(X^*)}{X^*},\quad \beta_1:=\frac{G(X^*,Y^*)}{X^*},\quad \beta_2:=\frac{G(X^*,Y^*)}{Y^*},\quad \beta_m:=\frac{M(Y^*)}{Y^*}.
\ee
Since $(x,y)=(1,1)$ is an equilibrium point we know that the following holds:
\be
\label{eq:gm_local_eq_cond}
\begin{array}{lclcl}
0&=&\frac{S(X^*)}{X^*}s(1)-\frac{G(X^*,Y^*)}{X^*}g(1,1)&=&\beta_s-\beta_1,\\
0&=&\frac{G(X^*,Y^*)}{Y^*}g(1,1)-\frac{M(Y^*)}{Y^*}m(1)&=&\beta_2-\beta_m.\\
\end{array}
\ee
Therefore \eqref{eq:gm_local1} can be re-written as
\be
\label{eq:gm_local2}
\begin{array}{lcl}
x'&=& \beta_1(s(x)-g(x,y)),\\
y'&=& \beta_2(g(x,y)-m(y)).\\
\end{array}
\ee 
The Jacobian at the equilibrium $(x,y)=(1,1)$ is then given by
\bea
\label{eq:local_Jac}
J(1,1)&=&\left(
\begin{array}{cc}
\beta_1~\partial_x [s(x)- g(x,y)]|_{(x,y)=(1,1)} & -\beta_1~\partial_y [g(x,y)]|_{(x,y)=(1,1)}\\ 
\beta_2~\partial_x [g(x,y)]|_{(x,y)=(1,1)} & \beta_2~\partial_y[g(x,y)- m(y)]|_{(x,y)={1,1}}\\ 
\end{array}
\right)\\
&=:&
\left(
\begin{array}{cc}
\beta_1[s_x-g_x] & -\beta_1g_y\\ 
\beta_2g_x & \beta_2[g_y-m_y]\\ 
\end{array}
\right)
\eea
where $\partial_x$, $\partial_y$ denote partial derivatives and we have introduced another set of parameters 
\be
\label{eq:gm_local_elasticities}
\begin{array}{lcl}
s_x=\partial_x(s(x))|_{x=1},&\quad &g_x=\partial_x(g(x,y))|_{(x,y)=(1,1)},\\
g_y=\partial_y(g(x,y))|_{(x,y)=(1,1)}, &\quad& m_y=\partial_y(m(y))|_{y=1}.\\ 
\end{array}
\ee
Based on this parametrization of the Jacobian, one can start to carry out a bifurcation analysis. From this example, we observe that the main steps of the method are:

\begin{enumerate}
 \item[(G1)] Build a model of the underlying process as evolution equations and group different terms in the resulting equations.
 \item[(G2)] Apply a transformation in phase space based on the existence of an equilibrium point and introduce parameters.
 \item[(G3)] Interpret the generalized parameters in the modeling context.
 \item[(G4)] Apply methods such as bifurcation analysis to characterize the dynamics of the generalized model.
\end{enumerate}

However, many open mathematical questions remain when we try to apply (G1)-(G4). It is the main goal of this paper to provide a detailed systematic and rigorous description of (G1)-(G4) for a wide variety of evolution equations. Furthermore, we will answer many questions that could not be answered appropriately in the context of applications; see references in Section \ref{sec:applications}.
 
\section{Structure of Generalized Models}
\label{sec:gm}

We start with generalized models for ordinary differential equations (ODEs). A general autonomous first-order system of ODEs is given by
\be
\label{eq:ODE}
\frac{dX}{dt}=X'=F(X;\mu)
\ee
where $X\in\R^n$ are phase space variables, $\mu\in\R^p$ are parameters and the vector field $F:\R^n\times \R^p\ra \R^n$ is assumed to be at least continuously differentiable in $X$ and continuous in $\mu$. If we can specify a particular map $F$ then the main task is to analyze the dynamics of \eqref{eq:ODE} i.e. to partition the parameter space $\R^p$ into regions of qualitatively equivalent dynamics \cite{Kuznetsov}. If we do not specify any assumptions on $F$ we focus on the abstract analysis of ODEs \cite{Hale}. Generalized models provide one possibility to bridge the gap between specific models and 
abstract analysis by making some structural assumptions on $F$ without specifying the map completely. We assume that \eqref{eq:ODE} has a 
\texttt{decomposition} of the form
\be
\label{eq:ODE1}
X_i'=F_i(X;\mu)=\sum_{k=1}^{K_i}a_{i,k} F_{i,k}(X;\mu), 
\ee
where $a_{i,k}=+1$ or $a_{i,k}=-1$, the subscript $i\in\{1,2,\ldots,n\}$ indicates the $i$-th coordinate, $K_i\geq 2$ and $F_i,F_{i,k}:\R^n\times \R^p\ra \R^+$. We are going to discuss the positivity assumption on the map $F$ at the end of this section. The terms $F_{i,k}$  with $a_{i,k}=1$ are called \texttt{gain terms} and those with $a_{i,k}=-1$ \texttt{loss terms}. We note that the type of decomposition is decided as part of the mathematical modeling and does not follow a fixed set of rules. However, the basic principle of grouping the different terms is often provided by their role in the mathematical model and the resulting system \eqref{eq:ODE1} has a systematic structure. 

A first step to understand the dynamics of \eqref{eq:ODE} is to analyze the stability and bifurcations of equilibria. Suppose there exists an equilibrium point $X^*$ so that $F(X^*;\mu)=0$. If we are only interested in the local dynamics near $X^*$ we can relax the differentiability assumptions on $F$ to a neighborhood of $X^*$. The local dynamics at $X^*$ is given to first-order by analyzing the eigenvalues of the Jacobian
\be
\label{eq:Jac}
\left.J(X;\mu)\right|_{X=X^*}=(D_XF)(X^*;\mu)=
\left(\frac{\partial F_i}{\partial X_j}(X^*;\mu)\right)_{ij}.
\ee
If we do not specify $F$ exactly then $X^*$ has to be treated as an unknown. The derivatives of functions/rates at the unknown equilibrium are often difficult to interpret in terms of physical parameters. Therefore, we would like to consider a transformation that allows a physical interpretation of parameters. Generalized modeling \cite{GrossFeudel} assumes that $X_i^*\neq 0$; we are going to discuss the special cases $X_i^*=0$ and $X_i^*\ra 0$ at the end of this section. Then one considers the \texttt{normalizing coordinate change}
\be
\label{eq:rescale}
x_i=\frac{X_i}{X^*_i}=:h_i(X), \qquad \text{for $i\in\{1,2,\ldots,n\}$.}
\ee
We remark that the idea of re-scaling to simplify or de-singularize a problem appears in several mathematical approaches for analyzing nonlinear systems; a typical example is provided by the blow-up method \cite{Dumortier,Dumortier1} that can be viewed as a phase space re-scaling in suitable coordinates. With the transformation \eqref{eq:rescale} the ODE \eqref{eq:ODE} transforms to 
\be
\label{eq:ODE_scaled}
x_i'=\frac{1}{X_i^*}F_i(X^*_1x_1,X^*_2x_2,\ldots,X^*_nx_n)=:\tilde{F}_i(x), 
\qquad \text{for $i\in\{1,2,\ldots,n\}$}
\ee
where we have omitted the $\mu$ parameter dependence for notational convenience. Therefore the equilibrium $X^*$ is transformed to $x^*=(1,1,\ldots,1)=:1$.\\

\textit{Remark:} A standard coordinate change in dynamical systems \cite{GH} is to consider the transformation $\bar{x}_i:=X_i-X_i^*$ so that the equilibrium point is moved to $\bar{x}=(0,\ldots,0)=:0$. This transformation is  mathematically convenient but does not provide a normalization of parameters as the coordinate change \eqref{eq:rescale}.\\

We can immediately check that the eigenvalues of the Jacobian \eqref{eq:Jac} remain unchanged.

\begin{prop}
\label{prop:det}
If $X^*_i\neq 0$ for all $i\in\{1,\ldots,N\}$ then the eigenvalues of the Jacobian are invariant under \eqref{eq:rescale} i.e. $\text{spec}(D_XF(X^*))=\text{spec}(D_x\tilde{F}(1))$.
\end{prop}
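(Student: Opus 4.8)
The plan is to observe that the normalizing coordinate change \eqref{eq:rescale} is \emph{linear}, so that the transformed Jacobian $D_x\tilde F(1)$ is conjugate to the original Jacobian $D_XF(X^*)$ by an invertible diagonal matrix, and conjugate matrices have the same spectrum. The hypothesis $X^*_i\neq 0$ for all $i$ enters at exactly one point: it is what makes the conjugating matrix invertible.

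First I would set $\Lambda:=\mathrm{diag}(X^*_1,X^*_2,\ldots,X^*_n)$, which is invertible precisely because every $X^*_i\neq 0$. Then \eqref{eq:rescale} reads $x=\Lambda^{-1}X$, and \eqref{eq:ODE_scaled} becomes $\tilde F(x)=\Lambda^{-1}F(\Lambda x)$. Differentiating component-wise with the chain rule gives, for each $x$,
\[
\frac{\partial \tilde F_i}{\partial x_j}(x)=\frac{1}{X^*_i}\,\frac{\partial F_i}{\partial X_j}(\Lambda x)\,X^*_j,
\qquad\text{that is,}\qquad
D_x\tilde F(x)=\Lambda^{-1}\,(D_XF)(\Lambda x)\,\Lambda .
\]
Evaluating at the transformed equilibrium $x=1$, which corresponds to $X=\Lambda\cdot 1=X^*$ by the computation preceding the proposition, yields $D_x\tilde F(1)=\Lambda^{-1}\,(D_XF)(X^*)\,\Lambda$.

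Finally, since $\Lambda$ is invertible the matrices $D_x\tilde F(1)$ and $D_XF(X^*)$ are similar, hence share the same characteristic polynomial and therefore the same set of eigenvalues, which is the assertion $\mathrm{spec}(D_XF(X^*))=\mathrm{spec}(D_x\tilde F(1))$.

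I do not expect a genuine obstacle here; the proof is a one-line similarity argument. The only points that need care are the bookkeeping in the chain rule, so that the factors $X^*_j$ from $\partial(\Lambda x)_j/\partial x_j$ land on the correct (column) side and produce the conjugation $\Lambda^{-1}(\cdot)\Lambda$ rather than a mere rescaling, and the explicit remark that invertibility of $\Lambda$ — the sole use of the hypothesis — is what legitimizes passing to similar matrices. (If one prefers a coordinate-free phrasing, one may instead note that for any invertible linear $T$ the time-$t$ flows of $X'=F(X)$ and $x'=T^{-1}F(Tx)$ are conjugate via $T$, so their linearizations at corresponding equilibria are conjugate; but the direct chain-rule computation above is the most economical.)
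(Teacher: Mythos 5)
Your proof is correct and is essentially the paper's argument: the paper performs the same chain-rule computation $\partial\tilde F_i/\partial x_j=(X^*_j/X^*_i)\,\partial F_i/\partial X_j$ and then factors the scalars $1/X^*_i$ out of the rows and $X^*_j$ out of the columns of the characteristic determinant, which is exactly your similarity $D_x\tilde F(1)=\Lambda^{-1}(D_XF)(X^*)\Lambda$ carried out at the level of $\det(\cdot-\lambda\,\mathrm{Id})$. Your explicit conjugation phrasing is a clean equivalent; no gap.
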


\begin{proof}
By direct calculation we find that 
\beann
\det\left[(D_x\tilde{F})(1)-\lambda ~\text{Id}\right]
&=&\det\left[\left.\left(\frac{\partial }{\partial x_j} \frac{1}{X_i^*}
F_i(X^*_1x_1,X^*_2x_2,\ldots,X^*_nx_n)\right)_{ij}\right|_{x=1}-\lambda ~\text{Id}\right]\\
&=& \det \left[\left(\frac{X^*_j}{X^*_i}\frac{\partial F_i}{\partial X_j}(X^*)
-\frac{X_j^*}{X_i^*}\lambda \delta_{ij}\right)_{ij}\right]\\
&\stackrel{(a)}{=}& \left(\prod_{i=1}^n \frac{1}{X^*_i}\right) 
\det \left[\left(X_j^*\frac{\partial F_i}{\partial X_j}(X^*)
-X_j^*\lambda \delta_{ij}\right)_{ij}\right]\\
&\stackrel{(b)}{=}& \left(\prod_{j=1}^n X^*_j\right) \left(\prod_{i=1}^n \frac{1}{X^*_i}\right) 
\det \left[\left(\frac{\partial F_i}{\partial X_j}(X^*)
-\lambda \delta_{ij}\right)_{ij}\right]\\
&=&\det[(D_XF)(X^*)-\lambda~ \text{Id}]
\eeann
where we have factored out non-zero scalars in step $(a)$ for each row and in $(b)$ for each column using linearity of determinants with respect to rows and columns. The result follows. 
\end{proof}

The equivalence of eigenvalues and the associated stability properties turns out to be of primary importance in many applications of generalized models \cite{GrossRudolfLevinDieckmann}; see also Section \ref{sec:applications}. However, Proposition \ref{prop:det} can also be viewed as a corollary to the following global result. 

\begin{prop}
\label{eq:equiv}
Suppose that $F\in C^k$ for some $k\in\N_0 \cup \{\omega\}$, where $C^\omega$ denotes analytic functions, and that $X^*_i\neq 0$. Then the ODEs \eqref{eq:ODE} and \eqref{eq:ODE_scaled} are $C^k$-smoothly equivalent via the map \eqref{eq:rescale}.
\end{prop}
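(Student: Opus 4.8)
The plan is to observe that the normalizing coordinate change \eqref{eq:rescale} is a linear diffeomorphism and that \eqref{eq:ODE_scaled} is, by construction, exactly its pushforward of \eqref{eq:ODE}; the claim then reduces to the standard fact that a $C^k$ change of variables carries the solutions of one ODE onto those of the transformed ODE.

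First I would fix notation. Let $U\subseteq\R^n$ be the open set on which $F$ is defined and $C^k$, with $X^*\in U$, and let $D:=\mathrm{diag}(X_1^*,\ldots,X_n^*)$, which is invertible precisely because each $X_i^*\neq 0$. Then the map $h$ of \eqref{eq:rescale} is $h(X)=D^{-1}X$, a linear isomorphism of $\R^n$, hence a $C^\omega$ (and therefore $C^k$) diffeomorphism from $U$ onto $V:=h(U)$, with inverse $h^{-1}(x)=Dx$ and constant derivative $Dh\equiv D^{-1}$.

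Next I would verify the pushforward identity and conclude. For a $C^1$ curve $X(t)$ in $U$, put $x(t):=h(X(t))$; by the chain rule $x'(t)=D^{-1}X'(t)$, so $X$ solves \eqref{eq:ODE} if and only if $x$ solves $x'=D^{-1}F(Dx)$, whose $i$-th component is $\frac{1}{X_i^*}F_i(X_1^*x_1,\ldots,X_n^*x_n)=\tilde F_i(x)$, i.e.\ the right-hand side of \eqref{eq:ODE_scaled}. Thus $h$ maps solution curves of \eqref{eq:ODE} bijectively onto solution curves of \eqref{eq:ODE_scaled}, preserving time parametrization and orientation, and likewise for $h^{-1}$; this is exactly the assertion that the two systems are $C^k$-smoothly equivalent via \eqref{eq:rescale} (in fact $C^k$-conjugate, and for $k\geq 1$ one may phrase this at the level of flows as $\Phi^{\tilde F}_t=h\circ\Phi^F_t\circ h^{-1}$), and Proposition \ref{prop:det} is recovered by linearizing it at $X^*$ and $x^*=1$. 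There is essentially no hard step here --- it is linear algebra and the chain rule; the only points that deserve care are tracking the domain, so that $h$ is stated as a diffeomorphism $U\to V$ rather than of all of $\R^n$ when $F$ is only locally defined, and matching the precise bookkeeping of the definition of $C^k$-equivalence from \cite{Kuznetsov}, in particular that a $t$-independent $C^k$ coordinate change yields the stronger notion of conjugacy.
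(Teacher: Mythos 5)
Your proof is correct and takes essentially the same route as the paper's: the paper's entire argument is the one-line observation that $h$ is a $C^k$-diffeomorphism conjugating the vector fields via $F(X)=(D_Xh)^{-1}(X)\,\tilde F(h(X))$, which is exactly your pushforward identity; you merely spell out the chain-rule verification and the (correct) remark that the time-independent change of variables in fact gives conjugacy, not just equivalence.
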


\begin{proof}
Observe that $x=h(X)=(h_1(X),\ldots,h_n(X))$ is a $C^k$-diffeomorphism that conjugates 
the vector fields 
\benn
F(X)=(D_Xh)^{-1}(X)~\tilde{F}(h(X)).
\eenn
\end{proof}

The global smooth equivalence we showed is much stronger than (local) topological equivalence \cite{Kuznetsov} and the normalizing coordinate change \eqref{eq:rescale} can be viewed as leaving the dynamics completely unchanged. The next steps of generalized modeling involves grouping and labeling the free parameters so that they can be interpreted as modeling parameters. We introduce a notation for the normalized gain and loss terms
\be
\label{eq:func_normalized}
f_{i,k}(x):=\frac{F_{i,k}(X_1^*x_1,\ldots,X^*_nx_n)}{F_{i,k}(X^*)}=
\frac{F_{i,k}(X_1^*x_1,\ldots,X^*_nx_n)}{F_{i,k}^*}
\ee
where $F_{i,k}^*:=F_{i,k}(X^*)$ and we assume that $F_{i,k}(X^*)\neq 0$. Then \eqref{eq:ODE_scaled} reads
\be
\label{eq:ODE_scaled1}
x_i'=\sum_{k=1}^{K_i}a_{i,k}\frac{F_{i,k}^*}{X_i^*}f_{i,k}(x), 
\qquad \text{for $i\in\{1,2,\ldots,n\}$.}
\ee
As a next step we group two parameters together
\be
\label{eq:def_beta}
\tilde{\beta}_{i,k}:=\frac{F_{i,k}^*}{X_i^*}.
\ee

\textit{Remark:} From a physical point of view, we can also consider the units in the definition of $\tilde{\beta}_{i,k}$. $F_{i,k}^*$ is always a rate, for example mass per unit time. Since $X^*$ has the dimension of mass this implies that $\tilde{\beta}_{i,k}$ has the dimension 1/time.\\

Using definition \eqref{eq:def_beta} and the equilibrium point condition $x_i'=0$ at $x^*=1$ in \eqref{eq:ODE_scaled1} gives $n$ conditions
\be
\label{eq:lin_eq_eli}
0=\sum_{k=1}^{K_i}a_{i,k}\tilde{\beta}_{i,k} \qquad \text{for $i\in\{1,2,\ldots,n\}$.}
\ee
Therefore, we can hope to eliminate $n$ parameters. For example, we could try eliminating $\tilde{\beta}_{i,1}$ and set $\tilde{\beta}_{i,1}=a_{i,1}(-\sum_{a_{i,k}=1,k\neq1}\tilde{\beta}_{i,k}
+\sum_{a_{i,k}=-1}\tilde{\beta}_{i,k})$.
This elimination procedure can be formalized as follows: Define the vector 
\be
\label{eq:formal_beta}
\tilde{\beta}:=(\tilde{\beta}_{1,1},\tilde{\beta}_{1,2},\ldots,\tilde{\beta}_{1,K_1},\tilde{\beta}_{2,1},\ldots,\tilde{\beta}_{n,K_n})^T\in\R^\kappa
\ee
where $\kappa=\sum_{i=1}^n K_i$. Then \eqref{eq:lin_eq_eli} can be re-written as a matrix equation 
\be
\label{eq:lin_mat_el}
0=\mathcal{A}\tilde{\beta}
\ee
where the $n\times \kappa$ matrix $\mathcal{A}$ has elements in $\{-1,0,1\}$. The rank-nullity theorem gives
\be
\label{eq:ranknullity}
\kappa=\dim(\text{ker}(\cA))+\dim(\text{im}(\cA))=\dim(\text{ker}(\cA))+\text{rank}(\cA)).
\ee
This shows that $\text{rank}(\cA)$ is the number of parameters that we can eliminate and that $\dim(\text{ker}(\cA))$ is the number of remaining parameters after the linear relations \eqref{eq:lin_eq_eli} have been applied. The elimination of parameters is related to concepts used in structural kinetic modeling where the matrix $\cA$ is closely related to the stoichiometric reaction matrix with normalized entries \cite{Steueretal,HeinrichSchuster,SteuerGrossSelbigBlasius}.\\
 
Observe that we have a choice which parameters $\tilde{\beta}_{i,k}$ we eliminate using the algebraic equations \eqref{eq:lin_mat_el}. A further optional step is to introduce a parameter $\alpha_i$ for each variable and set
\be
\label{eq:beta}
\beta_{i,k}:=\frac{\tilde{\beta}_{i,k}}{\alpha_i}.
\ee 
where we assume that $\alpha_i>0$.\\

\textit{Remark:} From a physical point of view, we want to introduce $\alpha_i$ to \texttt{nondimensionalize}. This implies that $\alpha_i$ has to have the dimension 1/time while $\beta_{i,k}$ is dimensionless; in this case, we can interpret $\beta_{i,k}$ as ratios. One particular important choice to make this interpretation more precise is to consider the possible definition \cite{GrossFeudel}
\be
\label{eq:def_alpha_alt}
\alpha_i:=\sum_{k:~a_{i,k}=1} \tilde{\beta}_{i,k}=\sum_{k:~a_{i,k}=-1} \tilde{\beta}_{i,k}
\ee
where the equality between the two sums follows from \eqref{eq:lin_mat_el}. Using this definition we find that 
\benn
\beta_{i,k}=\frac{\tilde{\beta}_{i,k}}{\sum_{k:~a_{i,k}=1} \tilde{\beta}_{i,k}}=\frac{\tilde{\beta}_{i,k}}{\sum_{k:~a_{i,k}=-1} \tilde{\beta}_{i,k}}
\eenn
which interprets $\beta_{i,k}$ as the rate associated to the term with index $(i,k)$ divided by the total gain (or loss) rate i.e. we have obtained a ratio; see also Section \ref{sec:applications}. We shall not make explicit use of definition \eqref{eq:def_alpha_alt} here as it can be viewed as one particular choice of nondimensionalization. One can define a \texttt{time scale} as a physical quantity that has units 1/time. Therefore we shall call $\alpha_i$ \texttt{time scale parameters} from now on. Note that this justifies our assumption $\alpha_i>0$ on the basis of the underlying physical process.\\

Now we can re-write the differential equation \eqref{eq:ODE_scaled1} as 
\be
\label{eq:ODE_scaled2}
x_i'=\alpha_i\left(\sum_{k=1}^{K_i}a_{i,k}\beta_{i,k}f_{i,k}(x)\right), 
\qquad \text{for $i\in\{1,2,\ldots,n\}$}
\ee
where the relation \eqref{eq:lin_mat_el} is understood to apply as well. We call 
the parameters $\alpha_{i}$ and $\beta_{i,k}$ (resp. $\tilde{\beta}_{i,k}$) 
\texttt{scale parameters}. Obviously we have introduced quite a number of scale parameters to 
avoid specifying the functions in our model; therefore it is important to know how many scale 
parameters will appear in the model. We have the following result:

\begin{prop}
\label{prop:para_num}
The number of scale parameters for a generalized model \eqref{eq:ODE_scaled1} is as follows: 
\begin{enumerate}
 \item[(C1)] If $\tilde{\beta}_{i,k}$ are the only scale parameters and $\tilde{\beta}_{i,k}\neq \tilde{\beta}_{l,m}$ for all pairs $(i,k)\neq (l,m)$ then the minimum number of scale parameters is given by 
\be
\label{eq:beta_tilde}
\kappa-\text{rank}(\cA)=\dim(\text{ker}(\cA)).
\ee 
If all $\tilde{\beta}_{i,k}$ appear as multiplicative factors after the elimination via $A\tilde{\beta}=0$ then one more parameter can be eliminated.
 \item[(C2)] If $\beta_{i,k}$, $\alpha_i$ are the scale parameters and $\beta_{i,k}\neq \beta_{l,m}$ for all pairs $(i,k)\neq (l,m)$ then the minimum number of scale parameters is
\be
\label{eq:beta_tilde1}
\dim(\text{ker}(\cA))+n-\eta-1
\ee 
\end{enumerate}
where $\eta$ is the number of scale parameters $\beta_{i,k}$ that appear as multiplicative pre-factors after the elimination via \eqref{eq:lin_mat_el}.
\end{prop}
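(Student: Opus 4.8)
The plan is to read (C1) and (C2) as two bookkeeping statements layered on the rank--nullity identity \eqref{eq:ranknullity}, augmented by two elementary ``free'' moves: absorbing a multiplicative pre-factor into a time-scale parameter, and performing one global time rescaling $t\mapsto ct$, $c>0$, which multiplies the whole vector field (hence every scale parameter) by $c$. \emph{Preliminaries.} The $i$-th row of $\cA$ is supported exactly on the columns indexed by $(i,1),\dots,(i,K_i)$, so rows with different $i$ have disjoint supports; since each entry $a_{i,k}\in\{-1,1\}$ and $K_i\ge 2$, every row is nonzero, and disjoint supports together with nonzero rows force the rows to be linearly independent. Hence $\mathrm{rank}(\cA)=n$ and $\dim(\ker\cA)=\kappa-n$. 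Moreover each $\tilde\beta_{i,k}=F_{i,k}^*/X_i^*$ is nonzero (by the running assumptions $F_{i,k}^*\neq0$, $X_i^*\neq0$), and since $\alpha_i>0$ the relation \eqref{eq:beta} shows that $\beta$ satisfies the equilibrium relations \eqref{eq:lin_mat_el} if and only if $\tilde\beta$ does, with every $\beta_{i,k}$ again nonzero.

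\emph{Proof of (C1).} Here the scale parameters are just the $\tilde\beta_{i,k}$, nominally $\kappa$ in number, constrained by \eqref{eq:lin_mat_el}. Choosing one index $(i,k_i)$ in each block $i$, the corresponding $n\times n$ submatrix of $\cA$ is diagonal with entries $a_{i,k_i}\in\{-1,1\}$, hence invertible, so \eqref{eq:lin_mat_el} can be solved for $\tilde\beta_{1,k_1},\dots,\tilde\beta_{n,k_n}$ (this is the elimination sketched before \eqref{eq:formal_beta}), leaving exactly $\kappa-\mathrm{rank}(\cA)=\dim(\ker\cA)$ of the $\tilde\beta_{i,k}$ free. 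Since the parameter space is precisely $\ker\cA$ and, by the distinctness hypothesis, no two surviving parameters are accidentally identified, $\dim(\ker\cA)$ is the smallest number obtainable from the equilibrium relations alone, which is \eqref{eq:beta_tilde}. If, after this elimination, every surviving $\tilde\beta_{i,k}$ enters \eqref{eq:ODE_scaled1} only as a multiplicative factor of its normalized function, then a time rescaling $t\mapsto ct$ scales all of them simultaneously, and taking $c$ to be the reciprocal of one (nonzero) survivor normalizes that parameter to $1$ and deletes it, giving the stated further reduction to $\dim(\ker\cA)-1$.

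\emph{Proof of (C2).} Now the scale parameters are $\alpha_1,\dots,\alpha_n$ together with the $\beta_{i,k}$. By the Preliminaries the equilibrium relations are equivalent to $\cA\beta=0$, so the same block argument eliminates one $\beta_{i,k_i}$ per equation, leaving $\dim(\ker\cA)$ of the $\beta_{i,k}$ and all $n$ of the $\alpha_i$, i.e.\ a nominal $\dim(\ker\cA)+n$ parameters in \eqref{eq:ODE_scaled2}. Whenever a surviving $\beta_{i,k}$ occurs as a multiplicative pre-factor of the entire right-hand side of the $i$-th equation — that equation having the form $x_i'=\alpha_i\beta_{i,k}\,g_i(x)$ with $g_i$ free of $\beta_{i,k}$ — we absorb it by setting $\hat\alpha_i:=\alpha_i\beta_{i,k}$; this happens for precisely $\eta$ equations and at most once in each (since $\alpha_i$ multiplies the whole right-hand side in \eqref{eq:ODE_scaled2}), removing $\eta$ parameters and leaving $\dim(\ker\cA)+n-\eta$. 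A final global time rescaling scales all $n$ time-scale parameters by one common constant, and normalizing one of them to $1$ deletes it, yielding \eqref{eq:beta_tilde1}; again distinctness of the $\beta_{i,k}$ prevents the remaining parameters from being forced to coincide, so this count is minimal among those obtainable by these moves. As a consistency check, when $\eta=n$ (i.e.\ $K_i=2$ for all $i$) this reduces to the $\dim(\ker\cA)-1$ of (C1).

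\emph{Main obstacle.} The rank count is routine; the delicate points are (i) making ``appears as a multiplicative (pre-)factor after the elimination'' precise enough that each occurrence provably corresponds to deleting exactly one parameter and that at most one pre-factor can arise per equation (so $\eta\le n$ and this reduction is genuinely independent of the global time rescaling), and (ii) the minimality claim, namely that under the distinctness hypotheses the admissible moves — algebraic elimination via \eqref{eq:lin_mat_el}, absorption of pre-factors, and a single time rescaling — cannot be iterated any further. I expect (i) to hinge on the block structure of $\cA$ and a brief inspection of the eliminated $i$-th equation (it factors off a single $\beta_{i,k}$ exactly when only one summand survives there), and (ii) to follow from the observation that rank--nullity already extracts the maximal linear reduction, the other two mechanisms contributing exactly the $-\eta$ and the $-1$.
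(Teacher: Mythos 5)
Your proposal is correct and follows essentially the same route as the paper: obtain \eqref{eq:beta_tilde} from the rank--nullity discussion, absorb the $\eta$ multiplicative pre-factors into the time-scale parameters $\tilde\alpha_i:=\alpha_i\beta_{i,k}$, and remove one final parameter by the global time rescaling $t\mapsto t/\tilde\alpha_1$. Your added observations (that the disjoint row supports force $\mathrm{rank}(\cA)=n$, and the consistency check at $\eta=n$) are correct refinements not spelled out in the paper's proof.
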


\textit{Remark:} If the conditions $\tilde{\beta}_{i,k}\neq \tilde{\beta}_{l,m}$ resp. $\beta_{i,k}\neq \beta_{l,m}$ are violated then further parameters can obviously be eliminated. However, a violation of this condition is not generic within the class of vector fields we consider here so we shall not consider this situation any further; for more on genericity see Section \ref{sec:bifurcations}.\\

\begin{proof}(of Proposition \ref{prop:para_num})
The previous discussion leading up to equation \eqref{eq:ranknullity} yields \eqref{eq:beta_tilde}. The second part of (C1) that allows the elimination of one further parameter will be clear once we proved (C2). For (C2), we have $\dim(\text{ker}(\cA))$ parameters $\beta_{i,k}$ and $n$ parameters $\alpha_i$ after using the linear relations $\cA\beta=0$. Assume without loss of generality that in the first $\eta$ coordinates, the parameters $\beta_{1,k},\ldots,\beta_{\eta,k}$ appear as multiplicative prefactors so that the ODEs are 
\be
\label{eq:tscale_proof}
\begin{array}{rcl}
x_1'&=&\alpha_1\beta_{1,1}\sum_{k=1}^{K_1}a_{1,k}f_{i,k}(x),\\
&\vdots& \\
x_{\eta}'&=&\alpha_\eta\beta_{\eta,1}\sum_{k=1}^{K_1}a_{\eta,k}f_{\eta,k}(x),\\
x_{\eta+1}'&=&\alpha_{\eta+1}\sum_{k=1}^{K_1}\beta_{\eta+1,k}a_{\eta+1,k}f_{\eta+1,k}(x),\\
&\vdots&\\
x_{n}'&=&\alpha_{n}\sum_{k=1}^{K_1}\beta_{n,k}a_{n,k}f_{n,k}(x),\\
\end{array}
\ee
Now we define new time scale parameters $\tilde{\alpha}_i:=\alpha_i\beta_{i,k}=\tilde{\beta}_{i,k}$ for $i\in\{1,2,\ldots,\eta\}$ and $\tilde{\alpha}_i=\alpha_i$ for $i\in \{\eta+1,\ldots,n\}$ which transforms \eqref{eq:tscale_proof} to
\be
\label{eq:tscale_proof1}
\begin{array}{rcl}
x_1'&=&\tilde{\alpha}_1\sum_{k=1}^{K_1}a_{1,k}f_{i,k}(x),\\
&\vdots& \\
x_{\eta}'&=&\tilde{\alpha}_\eta \sum_{k=1}^{K_1}a_{\eta,k}f_{\eta,k}(x),\\
x_{\eta+1}'&=&\tilde{\alpha}_{\eta+1}\sum_{k=1}^{K_1}\beta_{\eta+1,k}a_{\eta+1,k}f_{\eta+1,k}(x),\\
&\vdots&\\
x_{n}'&=&\tilde{\alpha}_{n}\sum_{k=1}^{K_1}\beta_{n,k}a_{n,k}f_{n,k}(x).\\
\end{array}
\ee
Therefore we have eliminated $\eta$ additional parameters. To eliminate one more parameter we can choose one time scale parameter $\tilde{\alpha}_i$, say without loss of generality $\tilde{\alpha}_1$, and apply a time re-scaling
\benn
t\mapsto t/\tilde{\alpha}_1.
\eenn
Defining new parameters $\tilde{\alpha}_i/\tilde{\alpha}_1$ yields the final result.
\end{proof}

We continue by interpreting the parameters $\beta_{i,k}$. They are gain and loss ratios for each term in the decomposition. To analyze the stability of the equilibrium $x^*=1$ we define \texttt{elasticities}
\be
\label{eq:exp_para}
f_{i,k,x_j}:=\frac{\partial f_{i,k}}{\partial x_j}(1)
=\left.\frac{\partial f_{i,k}}{\partial x_j}(x)\right|_{x=1}.
\ee
The elasticities are sometimes also called or \texttt{exponent parameters}. Then we find the Jacobian of \eqref{eq:ODE_scaled2} at $x=1$
\be
\label{eq:Jac_gen}
J(1)=\left(\begin{array}{cccc}
\alpha_1 & 0 & \cdots & 0 \\
0 & \alpha_2 & \cdots & 0 \\
\vdots & & \ddots & \vdots \\
0 & \cdots & & \alpha_n \\
\end{array}\right)
\left(\begin{array}{ccc}
\sum_{k=1}^{K_1} a_{1,k}\beta_{1,k} f_{1,k,x_1}  & \cdots & \sum_{k=1}^{K_1} a_{1,k}\beta_{1,k} f_{1,k,x_n} \\
\vdots &  \ddots & \vdots \\
\sum_{k=1}^{K_n} a_{n,k}\beta_{n,k} f_{n,k,x_1} & \cdots & \sum_{k=1}^{K_n} a_{n,k}\beta_{n,k} f_{n,k,x_n} \\
\end{array}\right).
\ee 
We also refer to the set of scale and exponent parameters as \texttt{generalized parameters}.

The key input to the generalized modeling process from applications is that we can often interpret the scale parameters and the elasticities for a given application. To explain the term elasticities we examine their definition more closely 
and observe that 
\benn
f_{i,k,x_j}=\frac{\partial f_{i,k}}{\partial x_j}(1)=
\frac{X^*_j}{F_{i,k}(X^*)}\frac{\partial F_{i,k}}{\partial X_j}(X^*)=
X^*_j\left(\frac{\partial}{\partial X_j}\ln F_{i,k}(X)\right)_{X=X^*}
\eenn
which interprets the exponent parameters as (scaled) logarithmic derivatives. Logarithmic derivatives are often called elasticities, particularly in the context of modeling economic problems \cite{Friedman} and in metabolic control theory \cite{Fell}. We can also view the exponent parameter $f_{i,k,x_j}$ as the \texttt{sensitivity} to variations of $f_{i,k}$ in the direction $x_j$ at the equilibrium point \cite{SmithSzidarovszkyKarnavasBahil}. If certain specific functional forms $F_{i,k}(X)$ are known from the modeling process we can get even more information (see \cite{Gross1}, p.49). We give a few examples using uni-variate functions $F_{i,k}(X)$ with $X\in \R$:

\begin{center}
\begin{tabular}{|l|l|}
\hline
$F_{i,k}(X)$ & $f_{i,k,x}$ \\
\hline 
$AX^q$ & $q$ \\
$A\exp(BX^q)$ & $qBX^*$ \\
$\frac{A}{B^q+X^q}$ & $-q\left(\frac{X^*}{B+X^*}\right)^q$\\
$\frac{AX^p}{B+X^q}$ & $\frac{(B+(X^*)^p) (B p+(p-q) (X^*)^q)}{(B+(X^*)^q)^2}$\\
\hline
\end{tabular}
\end{center}

Further possible dependencies of the exponent parameters are easily derived by direct differentiation of the given functional form.\\

We also have to make sure that we can find generalized parameters that belong to at least one specific model. Here, we support this important condition for sampling analysis \cite{GrossRudolfLevinDieckmann}.

\begin{prop}
Suppose we are given scale parameters that satisfy \eqref{eq:lin_eq_eli}. Given any set of elasticities $f_{i,k,x_j}$ there exists a specific model for the given generalized parameters.
\end{prop}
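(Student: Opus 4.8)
\textit{Proof proposal.} The plan is to exhibit an explicit model — built from generalized power-law (monomial) rate functions, i.e.\ the first entry of the table above — whose equilibrium is the point $X^*=(1,\ldots,1)=1$ and whose generalized parameters are the prescribed ones. With this choice of $X^*$ the normalizing change \eqref{eq:rescale} is the identity, so $x=X$, and it suffices to produce, for each pair $(i,k)$ with $i\in\{1,\ldots,n\}$ and $k\in\{1,\ldots,K_i\}$, a strictly positive function $F_{i,k}$ on the open positive orthant such that (i) $F_{i,k}(1)=\tilde\beta_{i,k}$ (equivalently $\tilde\beta_{i,k}=F_{i,k}^*/X_i^*$), (ii) the point $1$ is an equilibrium of $x_i'=\sum_{k=1}^{K_i}a_{i,k}F_{i,k}(x)$, and (iii) the normalized functions $f_{i,k}=F_{i,k}/F_{i,k}^*$ satisfy $\partial_{x_j}f_{i,k}(1)=f_{i,k,x_j}$ for the prescribed elasticities.

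The construction I would use is
\[
F_{i,k}(X):=\tilde\beta_{i,k}\prod_{j=1}^{n}X_j^{\,f_{i,k,x_j}}.
\]
Here $\tilde\beta_{i,k}>0$, since $\tilde\beta_{i,k}=F_{i,k}^*/X_i^*$ with $F_{i,k}^*\in\R^+$ and $X_i^*>0$ by the standing assumption; hence each $F_{i,k}$ is strictly positive and real-analytic on $(0,\infty)^n$, and $x_i'=\sum_{k=1}^{K_i}a_{i,k}F_{i,k}(x)$ reproduces exactly the prescribed decomposition structure (the signs $a_{i,k}$ and the term counts $K_i$). Property (i) is immediate from $F_{i,k}(1)=\tilde\beta_{i,k}$. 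For (ii), summing gives $\sum_{k=1}^{K_i}a_{i,k}F_{i,k}(1)=\sum_{k=1}^{K_i}a_{i,k}\tilde\beta_{i,k}=0$ by the hypothesis \eqref{eq:lin_eq_eli}, so $1$ is an equilibrium. For (iii), normalization yields $f_{i,k}(x)=\prod_{j}x_j^{f_{i,k,x_j}}$, whence the logarithmic-derivative identity for the elasticities used above gives $\partial_{x_j}f_{i,k}(1)=f_{i,k,x_j}$. If one also wants to match the time-scale parameters $\alpha_i$ and the dimensionless ratios $\beta_{i,k}$ individually rather than only their products $\tilde\beta_{i,k}=\alpha_i\beta_{i,k}$, one simply replaces $\tilde\beta_{i,k}$ by $\alpha_i\beta_{i,k}$ in the formula above; nothing else changes.

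I do not expect a genuine obstacle here: the whole content of the statement is that the monomial ansatz carries enough free constants ($\tilde\beta_{i,k}$) and free exponents ($f_{i,k,x_j}$) to hit any admissible target, and that the single compatibility relation \eqref{eq:lin_eq_eli} is precisely the equilibrium condition, hence holds by hypothesis. The only point needing care is the domain: because the exponents $f_{i,k,x_j}$ are arbitrary reals, the construction naturally lives on the positive orthant, which is consistent with (and the natural setting for) the standing hypothesis $X_i^*\neq 0$. If in addition one insists on a $C^k$ vector field defined on all of $\R^n$, it suffices to multiply each $F_{i,k}$ by a smooth positive bump function equal to $1$ on a neighborhood of the equilibrium and to extend by convenient positive $C^k$ values outside a compact set; since Proposition~\ref{prop:det} and all the generalized parameters depend only on the $1$-jet of $F$ at $X^*$, this modification changes nothing relevant. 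That extension is the sole mildly technical item, and even it is routine.
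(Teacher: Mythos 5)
Your proof is correct and takes essentially the same route as the paper: both rest on the monomial ansatz $F_{i,k}(X)=\gamma_{i,k}\prod_{j}X_j^{q_{i,k,j}}$, with the exponents matching the prescribed elasticities and the prefactors matching the prescribed scale parameters. The only difference is that you normalize the equilibrium to $X^*=(1,\ldots,1)$ so that the prefactor equals $\tilde\beta_{i,k}$ directly, whereas the paper keeps a general $X^*$ and solves for $\gamma_{i,k}$ at the end; this is immaterial.
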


\begin{proof}
The proof is constructive. We are going to define functions $F_{i,k}(X)$ that will produce the given set of generalized parameters. Fix some $i\in\{1,2,\ldots,n\}$ and $k\in\{1,\ldots,K_i\}$. Then define 
\benn
F_{i,k}(X):=\gamma_{i,k} X_1^{q_{i,k,1}}X_2^{q_{i,k,2}}\cdots X_n^{q_{i,k,n}}
\eenn
where we can assume without loss of generality that $\gamma_{i,k}\neq 0$ and we can choose $q_{i,k,j}$ for $j\in\{1,2,\ldots,n\}$. It follows from \eqref{eq:exp_para} that
\benn
f_{i,k}(x)=\frac{\gamma_{i,k} (X_1^*)^{q_{i,k,1}}x_1^{q_{i,k,1}}(X_2^*)^{q_{i,k,1}}x_2^{q_{i,k,2}}\cdots (X_n^*)^{q_{i,k,1}}x_n^{q_{i,k,n}}}{\gamma_{i,k} (X_1^*)^{q_{i,k,1}}(X_2^*)^{q_{i,k,2}}\cdots (X_n^*)^{q_{i,k,n}}}=
 x_1^{q_{i,k,1}}x_2^{q_{i,k,2}}\cdots x_n^{q_{i,k,n}}.
\eenn
Calculating the elasticities yields
\benn
f_{i,k,x_j}=\frac{\partial f_{i,k}}{\partial x_j}(1)=q_{i,k,j}.
\eenn
Since we are free to choose $q_{i,k,j}$ we can match the prescribed elasticities. Note carefully that the previous calculation was independent on the choice of $\gamma_{i,k}$. Now define
\benn
\gamma_{i,k}:=\tilde{\beta}_{i,k}\frac{X_i^*}{(X_1^*)^{q_{i,k,1}}\cdots (X_n^*)^{q_{i,k,n}}}
\eenn 
and observe that 
\benn
\frac{F_{i,k}^*(X_1^*,\ldots,X_n^*)}{X_i^*}=\tilde{\beta}_{i,k}
\eenn
so that we also match the prescribed scale parameters.
\end{proof}

As a last step we re-visit the positivity assumptions on the maps $F$ and the assumption $X_i^*\neq0$ for the equilibrium $X^*=(X_1^*,\ldots,X_n^*)$. First, suppose that $F_{i,k}(X^*)=F^*_{i,k}=0$ which implies that the definition \eqref{eq:func_normalized} cannot be used to define the function $f_{i,k}$. In this case, we have to replace each term of the form $\beta_{i,k}f_{i,k,x_j}$ in the Jacobian \eqref{eq:Jac_gen} by the standard un-normalized term
\benn
[\beta_{i,k}f_{i,k,x_j}]:=\frac{1}{\alpha_i}\left(\frac{\partial}{\partial x_j}\frac{F_{i,k}(X_1^*x_1,\ldots,X_n^*x_n)}{X_i^*}\right)_{x=1}
=\frac{1}{\alpha_i}\left(\frac{\partial}{\partial X_j}F_{i,k}(X)\right)_{X=X^*}.
\eenn
However, suppose we know from the mathematical modeling that 
\benn
F_{i,k}(X)=\sum_{l=1}^n(X_l-X^*_l)^{q_l}
\eenn
is a power function for some $q=(q_1,\ldots,q_n)$ with $q_l\geq 1$ for all $l\in\{1,\ldots,n\}$. Then $[\beta_{i,k}f_{i,k,x_j}]=1/\alpha_i$ if $q_j=1$ and $[\beta_{i,k}f_{i,k,x_j}]=0$ if $q_j>1$. Hence we can discard the terms arising from $F_{i,k}$ in the Jacobian if we know that $F_{i,k}$ is locally super-linear and vanishes at the equilibrium. Note that the Jacobian is undefined for $0<q<1$ but that this case is also excluded by our assumption that $F$ is at least continuously differentiable near the equilibrium point. 

Next, we consider the situation when $X_i^*=0$ and assume that $F_{i,k}(X^*)\neq 0$ for all $i,k$. Then the normalizing re-scaling transformation \eqref{eq:rescale} is not well-defined. If we know from the mathematical modeling that $X_i^*=0$ regardless of the parameters $\mu\in\R^p$ in the underlying model then we can simply define 
\benn
x_i:=X_i
\eenn
and carry out the generalized analysis. If $X^*_i(\mu)\ra 0$ as $\mu\ra \mu^*$ for some $\mu^*\in\R^p$ then this situation can also be incorporated into the generalized analysis. Indeed, notice that
\benn
\tilde{\beta}_{i,k}=\frac{F^*_{i,k}}{X^*_i}\stackrel{\mu\ra \mu^*}{\longrightarrow} \infty \qquad \text{and} \qquad
f_{l,k,x_i}=\frac{X^*_i}{F_{l,k}(X^*)}\frac{\partial F_{l,k}}{\partial X_i}(X^*)\stackrel{\mu\ra \mu^*}{\longrightarrow} 0.
\eenn
Therefore, the cases of large scale parameters $\tilde{\beta}_{i,k}$ (or $\beta_{i,k}$) and small elasticities involving partial derivatives with respect to $x_i$ naturally incorporate the cases where $X^*_i$ tends to zero. 
 
\section{Bifurcations of Generalized Models}
\label{sec:bifurcations}

In the previous section we have focused on the general algebraic structure of generalized models and the normalizing transformations. The next step is to investigate bifurcations in generalized models. We start with a brief review of some basic terminology and results that we are going to need throughout our analysis.\\

Using the $n\times n$ Jacobian matrix \eqref{eq:Jac_gen} we have access to the eigenvalues and their multiplicities at the equilibrium point $x=1$ ($X=X^*$). One can calculate the eigenvalues $\lambda_i$ numerically using standard methods such as un-symmetric QR factorization \cite{GolubvanLoan}. If $\Re(\lambda_i)\neq 0$ for all $i\in\{1,2,\ldots,n \}$ the Hartman-Grobman Theorem \cite{Hartman} implies that the flow near $x=1$ is locally topologically equivalent to the flow of the 
linearized system. In particular, we get asymptotic stability if $\Re(\lambda_i)<0$ for all $i$. Therefore a necessary condition for bifurcation under parameter variation is that $\Re(\lambda_i)=0$ for one (or multiple) eigenvalues. We briefly recall how to define an unfolding of a generic bifurcating family \cite{Wiggins,ArnoldGeomODE} as we need this terminology throughout this section. Let $F\in C^r(\R^n,\R^n)$ be a vector field defining the ODE \eqref{eq:ODE}. The smoothness $r$ will not be of primary relevance for us and we always assume that $r$ is sufficiently large in the following, at least locally. Let $J^s_x(F)$ denote the $s$-jet 
\benn
(x,F(x),DF(x),D^2F(x),\ldots,D^sF(x))
\eenn
of $F$ at $x$ with $s\leq r$; denote the associated space of jets by $J^s(\R^n,\R^n)$. The $s$-jet extension $\hat{F}$ 
of $F$ is a map 
\benn
\hat{F}:\R^n\ra J^s(\R^n,\R^n), \qquad \hat{F}(x)=J^s_x(F)
\eenn
that maps a phase space point to the associated jet; observe that we can identify the jet space $J^s(\R^n,\R^n)$ with $\R^m$ for a suitable $m$. Let $x_0\in \R^n$ denote a phase space point and let $U$ be a neighborhood of $x_0$ and set $V:=F(U)$; we shall restrict to studying the local behavior of the vector 
field near $x_0$ from now on. Let $\cE\subset J^s(U,V)$ denote the codimension $n$ subset of those $s$-jets 
that have an equilibrium point $x_0$ in $U$ \cite{Wiggins} where codimension is defined as 
\benn
\text{codim}(\cE)=\text{dim}(J^s(U,V))-\text{dim}(\cE)=m-\text{dim}(\cE).
\eenn
Let $\cB\subset \cE \subset J^s(U,V)$ denote the codimension $n+1$ set 
of vector fields with a non-hyperbolic equilibrium point $x_0$; observe that a non-hyperbolic equilibrium point 
is defined by conditions on $DF(x_0)$. Consider an $s$-jet $J^s_{x}(F)$ with a non-hyperbolic equilibrium point at $x_0$. Then $J^s_{x}(F)$ lies in a set $\cD\subseteq \cB$ of codimension $b$ in $J^s(U,V)$ for $b\geq n+1$ and we define the codimension of the equilibrium point $x_0$ as $b-n$. To understand the dynamics near a bifurcation point in $\cD$ we need a parametrized family of vector fields $F(x;\mu)$ so that the associated family of $s$-jets is transverse to $\cD$. Recall that transversality of maps $G\in C^r(\R^n,\R^m)$ to a submanifold $M\subset \R^m$ at $G(x_0)$ is defined by the requirement
\benn
DG(x_0)T_{x_0}\R^n+T_{G(x_0)}M=T_{G(x_0)}\R^m.
\eenn

The following theorem is of fundamental importance to justify the next steps. To state the theorem we recall that a property that holds for a countable intersection of dense open sets (i.e. on a residual set) is called generic.

\begin{thm}[Thom's Transversality Theorem, \cite{LuSing}]
\label{thm:Thom}
Let $M$ be a submanifold of $J^s(U,V)$. The set of maps $F\in C^r(U,V)$ whose $s$-jet extensions are transversal to $M$ is a residual set in $C^r(U,V)$ (for some $r$ depending on $s$ and $n$).
\end{thm}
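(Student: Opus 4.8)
The plan is to deduce the statement from two classical facts: Sard's theorem and the \emph{parametric} (Thom) transversality theorem, the latter itself a consequence of Sard. Recall the parametric version: if $\Phi:A\times S\ra N$ is a sufficiently differentiable map that is transversal to a submanifold $M\subseteq N$, then the set of $s\in S$ for which $\Phi(\cdot,s):A\ra N$ is transversal to $M$ has full measure, hence is dense; its proof applies Sard to the restriction to $\Phi^{-1}(M)$ of the projection $A\times S\ra S$. This is exactly where the hypothesis that the order $r$ of differentiability be large relative to $s$ and $n$ enters, since Sard requires the domain-to-target dimension gap to be controlled by the smoothness, and both grow with $s$ and $n$ through the dimension of $J^s(U,V)$.

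First I would localize. Cover $U$ by countably many open sets $U_j$ with compact closure $\overline{U_j}\subset U$, and write $M$ as a countable union of pieces $M_l$ with compact closure, each contained in a single chart of $J^s(U,V)$. For fixed $j,l$ let $\cO_{j,l}\subset C^r(U,V)$ be the set of $F$ whose $s$-jet extension $\hat F$ is transversal to $M$ at every point of $\overline{U_j}$ whose $\hat F$-image lies in $\overline{M_l}$. Transversality along a compact set to a submanifold with compact closure is a $C^r$-open condition for $r\ge s$, so each $\cO_{j,l}$ is open; moreover $\{F:\hat F\text{ transversal to }M\}=\bigcap_{j,l}\cO_{j,l}$. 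Since $C^r(U,V)$ is a Baire space, it suffices to prove that each $\cO_{j,l}$ is dense.

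For density fix $F$, $j$, $l$ and a neighborhood of $F$. Working in charts where $U_j$ and $V$ carry coordinates in $\R^n$, consider the perturbations $F_a:=F+p_a$, where $p_a:\R^n\ra\R^n$ ranges over the finite-dimensional space of polynomial maps of degree $\le s$ with coefficient vector $a\in\R^L$, and define
\[
\Phi:U_j\times\R^L\ra J^s(U_j,V),\qquad \Phi(x,a)=J^s_x(F_a)=\hat F_a(x).
\]
The crucial point is that $\Phi$ is a submersion: for fixed $x$, the map $a\mapsto J^s_x(F+p_a)$ already surjects onto the fibre of $J^s$ over $x$, because the coefficients of $p_a$ prescribe $p_a$ together with all its partial derivatives up to order $s$ at $x$ independently (Taylor expansion of a degree-$s$ polynomial about $x$). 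Hence $\Phi$ is transversal to $M_l$ — indeed to any submanifold — and the parametric transversality theorem gives, for almost every $a$, that $x\mapsto\Phi(x,a)=\hat F_a(x)$ is transversal to $M_l$ on $U_j$. Choosing such an $a$ with $|a|$ small places $F_a$ in the prescribed neighborhood of $F$, and (after splicing the local polynomial perturbation back by a cutoff, which does not affect transversality on $\overline{U_j}$) shows $F_a\in\cO_{j,l}$, so $\cO_{j,l}$ is dense.

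The main obstacle is not the analytic heart — the submersivity of $\Phi$ and the invocation of Sard are routine once the dimensions are set up — but the bookkeeping of the localization: one must arrange the compact exhaustions of $U$ and of $M$ so that the openness of $\cO_{j,l}$ genuinely holds ($M$ need not be closed in $J^s(U,V)$, and transversality along a non-compact set need not be open), and one must splice the locally constructed polynomial perturbations into a single global $C^r$ perturbation without spoiling the transversality already achieved on previously treated pieces. This is handled by an induction over the countably many pairs $(j,l)$ using a locally finite cover, cutoff functions, and shrinking neighborhoods at each stage, after which the Baire property of $C^r(U,V)$ yields the residual set.
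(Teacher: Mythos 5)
The paper does not prove this theorem: it is quoted as a classical result and attributed to the literature (\cite{LuSing}), so there is no internal argument to compare against. Your sketch is, in substance, the canonical proof of the Thom transversality theorem as found in the standard references (Golubitsky--Guillemin, Hirsch): reduce to the parametric transversality theorem (itself Sard's theorem applied to the projection of $\Phi^{-1}(M)$ onto the parameter factor), realize a submersive family of $s$-jet extensions by perturbing $F$ with polynomials of degree $\le s$ whose coefficients freely prescribe the jet at each point, and assemble the local statements by a compact exhaustion of $U$ and of $M$ together with the Baire property of $C^r(U,V)$. All the load-bearing steps are correct, and you correctly identify where the hypothesis ``$r$ large depending on $s$ and $n$'' enters (the smoothness threshold in Sard, applied to a map whose domain dimension grows with $\dim J^s(U,V)$). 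Two small points of precision: openness of each $\cO_{j,l}$ is a condition on $D\hat F$, hence on derivatives of $F$ up to order $s+1$, so it is open in the $C^{s+1}$ (not merely $C^{s}$) topology --- harmless under the blanket largeness of $r$; and the closing ``induction over the pairs $(j,l)$ with shrinking neighborhoods'' is not needed once you invoke Baire, since density of each individual $\cO_{j,l}$ requires only a perturbation adapted to that one pair, with no obligation to preserve transversality already achieved elsewhere. With those caveats your argument is a correct and essentially complete reconstruction of the proof the paper delegates to its reference.
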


Hence if we can find a parametrized family $F(x;\mu)$ with $\mu\in\R^p$ which is transverse to $\cD$ then we have constructed a generic representative. However, so far we have not taken the dynamics completely into account. Consider a vector field $F^*$ with $F^*(x_0,\nu_0)=0$. We say that $F^*(x,\nu)$ is induced from $F(x,\mu)$ near $(x_0,\mu_0)$ if there is a continuous map $\phi$, defined near $\mu_0$ with $\phi(\nu_0)=\mu_0$, so that
\benn
F^*(x,\nu)=F(x,\phi(\nu)).
\eenn

A parametrized family $F(x,\mu)$ is called a universal unfolding near an equilibrium point $(x_0,\mu_0)$ if every other parametrized family of $C^r$ vector fields is equivalent to a family of vector fields induced by $F(x,\mu)$. In general, it is difficult to verify for many bifurcations with higher-dimensional parameter spaces that a transversal family also forms a universal unfolding.\\

\textit{Remark:} Although the results described so far give a framework for the classification of bifurcation points according to codimension there are a few subtle technical points regarding e.g. the applicability of Thom's Theorem \ref{thm:Thom} or possible re-parametrizations of time and coordinate changes \cite{Wiggins}.\\

The classification of local bifurcation according to codimension \cite{Kuznetsov,GH} yields the following classification up to codimension two:

\begin{itemize}
 \item $codim=1$: Fold or saddle-node (single zero eigenvalue), Hopf (pair of pure imaginary eigenvalues).
 \item $codim=2$: Bogdanov-Takens (double real zero eigenvalues), Gavrilov-Guckenheimer or fold-Hopf (single zero eigenvalue and pure imaginary pair of eigenvalues), Hopf-Hopf (two pairs of pure imaginary eigenvalues), cusp (single zero eigenvalue and degenerate normal form coefficient), Bautin or generalized Hopf (pair of pure imaginary eigenvalues and zero first Lyapunov coefficient).
\end{itemize}

The eigenvalues of \eqref{eq:Jac_gen} depend on the generalized parameters so that we can detect necessary conditions for all codimension one and two bifurcations except cusp and Bautin bifurcation that are of codimension two as they violate 
codimension one non-degeneracy conditions. We shall not aim at a complete discussion of bifurcation analysis of generalized models but point out some features via examples.

\begin{ex}
\label{ex:bif1}
Consider the following generalized toy model 
\be
\label{eq:toy1D}
X'=F_1(X)-F_2(X)+F_3(X)
\ee
for $X\in \R$. A possible specific model covered by \eqref{eq:toy1D} is 
\be
\label{eq:SN_toy}
X'=X^2-2X+1-A, \qquad \text{for $A\in(-1,1)$}
\ee
where $F_1(X)=X^2$, $F_2(X)=2X$ and $F_3(X)=1-A$. It is easily checked that \eqref{eq:SN_toy} has two equilibria at $X^*_\pm=1\pm\sqrt{A}$ for $A>0$ that undergo a non-degenerate fold bifurcation at $A=0$. The bifurcation diagram is shown in Figure \ref{fig:fold}(a). After the normalizing scaling transformation $X=X^*_\pm x$ we find that \eqref{eq:SN_toy} gives two ODEs 
\be
\label{eq:SN_toy1}
x'=X^*_\pm x^2-2x+\frac{1-A}{X^*_\pm}=:b_\pm(x,A), \qquad \text{for $A\in[0,1)$.}
\ee

\begin{figure}[htbp]
\centering
\psfrag{x}{$x$}
\psfrag{X}{$X$}
\psfrag{A}{$A$}
\psfrag{a}{(a)}
\psfrag{c}{(c)}
\psfrag{b+}{(b+)}
\psfrag{b-}{(b-)}
\psfrag{beta}{$\beta$}
 \includegraphics[width=1\textwidth]{./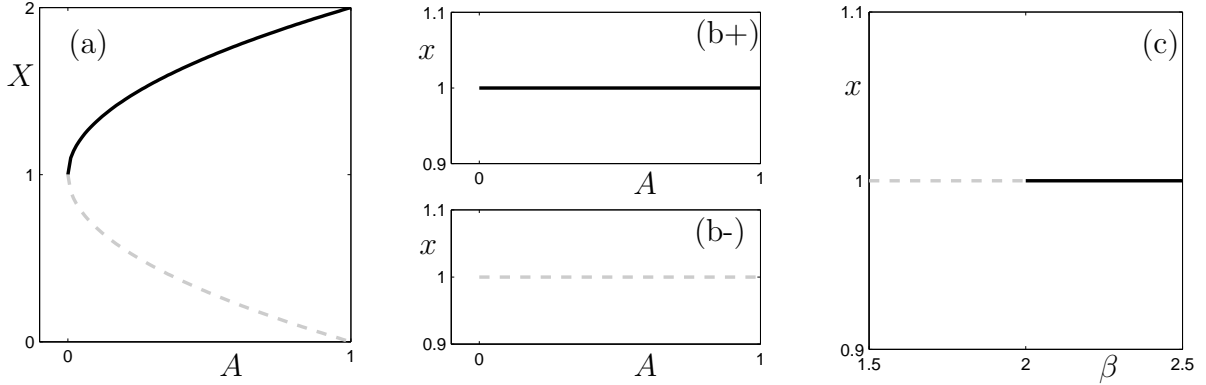}
\caption{\label{fig:fold}Comparison of conventional and generalized models in an example. Solid black curves are stable equilibria and dashed gray curves unstable equilibria. (a) Bifurcation diagram for the specific model \eqref{eq:toy1D} showing a fold bifurcation. (b) Fold bifurcation for the re-scaled equations \eqref{eq:SN_toy1}; we plotted the two equilibrium curves at $x=1$ separately to indicate that they are associated to different branches in (a). (c) Bifurcation diagram for the generalized model \eqref{eq:toy1D}.}
\end{figure}

The new bifurcation diagram of \eqref{eq:SN_toy1} is given in Figure \ref{fig:fold}(b$\pm$). The fold bifurcation from Figure \ref{fig:fold}(a) can still be recognized in Figure \ref{fig:fold}(b$\pm$) at $A=0$ since
\benn
\frac{\partial b_\pm}{\partial x}(1,0)=0 \quad \text{and} \quad 
\frac{\partial^2 b_\pm}{\partial x^2}(1,0)=2.
\qquad 
\eenn
The transversality condition requires looking at the parameter derivative
\be
\label{eq:der_ne}
\frac{\partial b_\pm}{\partial A}(x,A)=\pm\frac{x^2-1}{2\sqrt{A}}
\ee
and we observe that the condition is not well-defined for $(x,A)=(1,0)$. The generalized formulation of \eqref{eq:toy1D} is
\be
\label{eq:toy1D_gen}
x'=\alpha(\beta_{1}f_1(x)-\beta_2 f_2(x)+(\beta_2-\beta_1)f_3(x)).
\ee
Using a time re-scaling $t\mapsto t/(\alpha \beta_1)$ and setting $\beta:=\beta_2/\beta_1$ we get that \eqref{eq:toy1D_gen} can be written as
\be
\label{eq:toy1D_gen1}
x'=f_1(x)-\beta f_2(x)+(\beta-1)f_3(x).
\ee
The associated Jacobian for \eqref{eq:toy1D_gen1} at $x=1$ is
\benn
J(1)=f_{1,x}-\beta f_{2,x}+(\beta-1)f_{3,x}.
\eenn
If we assume that the exponent parameters are $(f_{1,x}.f_{2,x},f_{3,x})=(2,1,0)$ as for our specific model then we find that 
\benn
J(1)=2-\beta
\eenn
which satisfies the necessary condition for a saddle-node bifurcation at $\beta=2$. Since \eqref{eq:toy1D_gen} always has an equilibrium point at $x=1$ we find a bifurcation diagram as shown in Figure \ref{fig:fold}(c).
\end{ex}

From Example \ref{ex:bif1} we conclude that a generalized model will be able to detect the main bifurcation conditions for eigenvalue crossings but that one has to be careful in interpreting the shape of bifurcation diagrams. Furthermore, a first-order analysis will not check whether non-degeneracy and transversality conditions are satisfied. This raises the issue of genericity for generalized models which we address in the next example.

\begin{ex}
\label{ex:bif2}
We continue with Example \ref{ex:bif1} and the generalized model
\be
\label{eq:toy1D_gen1_new}
x'=f_1(x)-\beta f_2(x)+(\beta-1)f_3(x)=:g(x,\beta).
\ee
We check the generalized model for genericity near a fold bifurcation. Consider the associated $1$-jet space $J_x^1(\R,\R)$ to $g$ with elements
\benn
\left(x,g(x,\beta),\frac{\partial g}{\partial x}(x,\beta)\right)
\eenn 
which is three-dimensional. The bifurcation set $\cB$ for a fold bifurcation is given by
\benn
\{(x,0,0)\in J_x^1(\R,\R)\}
\eenn
which has dimension $1$ and codimension $b=2$. Therefore we get, as expected, that the fold bifurcation has codimension $b-n=2-1=1$. The $1$-jet extension to $g:\R\ra\R$ is the map
\be
\label{eq:1jet_ext}
\hat{g}(x,\beta)=\left(x,g(x,\beta),\frac{\partial g}{\partial x}(x,\beta)\right)
\ee
which we view as a map from $\hat{g}:\R^2\ra \R^3$ including the generalized parameter $\beta$. The linearization of the $1$-jet extension is
\be
\label{eq:ext_lin}
D\hat{g}(x,\beta)=\left(\begin{array}{cc} 
1 & 0 \\ \frac{\partial g}{\partial x}(x,\beta) & \frac{\partial g}{\partial \beta}(x,\beta) \\
\frac{\partial^2 g}{\partial x^2}(x,\beta) & \frac{\partial^2 g}{\partial x \partial \beta}(x,\beta) \\
\end{array}\right)
\ee
The tangent space to $\cB$ is spanned by the vector $(1,0,0)^T\in T_{\hat{g}(x,\beta)}\R^3$; note that we can obviously identify $T_{\hat{g}(x,\beta)}\R^3$ with $\R^3$. Evaluating \eqref{eq:ext_lin} at a fold bifurcation point $(1,\beta_0)$ we know that 
\benn
\frac{\partial g}{\partial x}(1,\beta_0)=0.
\eenn
This implies that checking transversality of the $1$-jet extension \eqref{eq:1jet_ext} reduces to checking that the $3\times 3$ matrix
\benn
H:=\left(
\begin{array}{ccc}
1 & 0 & 1\\ 0 & \frac{\partial g}{\partial \beta}(1,\beta_0) & 0 \\
\frac{\partial^2 g}{\partial x^2}(1,\beta_0) & \frac{\partial^2 g}{\partial x \partial \beta}(1,\beta_0) & 0\\
\end{array}
\right)
\eenn
is non-singular. This just means 
\benn
\det(H)=-\frac{\partial g}{\partial \beta}(1,\beta_0)\cdot \frac{\partial^2 g}{\partial x^2}(1,\beta_0) \neq 0.
\eenn 
This recovers the well-known \cite{Kuznetsov} non-degeneracy conditions 
\be
\label{eq:wellknown}
\frac{\partial g}{\partial \beta}(1,\beta_0)\neq 0 \qquad \text{and} \qquad \frac{\partial^2 g}{\partial x^2}(1,\beta_0)\neq 0.
\ee
Recall that if \eqref{eq:wellknown} holds then the unfolding is indeed universal in $\beta$ \cite{GH,Kuznetsov}. Only the last step of the genericity analysis has to be adapted to the generalized model. In particular, we can plug in the structure of the model \eqref{eq:toy1D_gen1_new} into \eqref{eq:wellknown} which yields the conditions
\be
\label{eq:gen_ndtv}
\begin{array}{lcl}
\frac{\partial g}{\partial \beta}(1,\beta_0)&=&-f_2(1)+f_3(1) \neq 0,\\
\frac{\partial^2 g}{\partial x^2}(1,\beta_0)&=&\frac{\partial^2 f_1}{\partial x^2}(1)-\beta_0 \frac{\partial^2 f_2}{\partial x^2}(1)+(\beta_0-1)\frac{\partial^2 f_3}{\partial x^2}(1) \neq 0.\\
\end{array}
\ee
The conditions \eqref{eq:gen_ndtv} restrict the function space for which one can expect that the generalized model has a fold bifurcation with a universal unfolding. The examples have also demonstrated that known results about bifurcations and unfoldings carry over easily to generalized models. The same calculations can be carried out for all other local bifurcations which will yield conditions analogous to \eqref{eq:gen_ndtv}.
\end{ex}

Even if the unfolding is not universal (e.g. if we had chosen a function space in Example \eqref{ex:bif2} with $f_2(1)=f_3(1)$) then we can often use the availability of additional parameters as the next example illustrates. 

\begin{ex}
Consider a planar generalized model 
\be
\label{eq:ex_generic}
\begin{array}{lcl}
x_1'&=&\sum_{k=1}^{K_1}a_{1,k}\beta_{1,k}f_{1,k}(x),\\ 
x_2'&=&\sum_{k=1}^{K_2}a_{2,k}\beta_{2,k}f_{2,k}(x),\\
\end{array}
\ee
where we assume that the time scales are all equal to $1$ for simplicity. Suppose \eqref{eq:ex_generic} has a Hopf bifurcation at $(x,\beta_{1,1})=(1,\beta_{1,1}^*)$ for all other generalized parameters fixed. The Jacobian $J(1)$ at $\beta_{1,1}=\beta_{1,1}^*$ has a complex conjugate pair of eigenvalues $\lambda_{1,2}=\lambda_{1,2}(\beta_{i,k},f_{i,k,x_j})$ depending on the generalized parameters with $c_1:=\text{Trace}(J(1))=0$. There are two non-degeneracy conditions \cite{Kuznetsov}. The transversality condition is
\be
\label{eq:gen_cond1}
c_2:=\left.\left(\frac{\partial}{\partial \beta_{1,1}} \Re(\lambda_{1,2})\right)\right|_{\beta_{1,1}=\beta_{1,1}^*}\neq 0.
\ee
The second non-degeneracy condition is that the first Lyapunov coefficient $l_1=:c_3$ is not equal to zero \cite{GH,KuehnCanLya}. The condition can be written in terms of the partial derivatives with respect to $x_{1,2}$ up to third order. In total, we have to satisfy three algebraic conditions depending upon the generalized parameters and the choice of functions $f_{i,k}$ to get a non-degenerate Hopf bifurcation 
\bea
c_1(\beta_{i,k},f_{i,k,x_j})&=&0,\label{eq:condHopf1}\\
c_2(\beta_{i,k},f_{i,k,x_j})&\neq& 0,\label{eq:condHopf2}\\
c_3(\beta_{i,k},f_{i,k})&\neq& 0.\label{eq:condHopf3}
\eea
Observe that the scale parameters $\beta_{1,k}$ or $\beta_{2,k}$ will appear as coefficients in the linear combination of $c_3$ so that varying one scale parameter we generically satisfy \eqref{eq:condHopf3} in the space of smooth functions $f_{i,k}\in C^r(\R^2,\R)$. Hence additional free generalized parameters can compensate for a restricted choice of functions $f_{i,k}$. Varying two more generalized parameters we can always satisfy \eqref{eq:condHopf1}-\eqref{eq:condHopf2} generically.
\end{ex}

We have seen that the scale parameters are relatively easy to understand. The elasticities are bit more complicated as the next example illustrates.

\begin{ex}
In principle, we can also just vary the elasticities and treat them as bifurcation parameters. The obvious caveat is that this might not yield a smooth family of functions everywhere in phase space. For example, if
\be
\label{eq:pow_ns}
F_{i,k}(X)=A(X-B)^p, \qquad \text{for $p>0$}
\ee
where $A,B$ are parameters, then $F_{i,k}$ is $C^\I$ for $X\neq B$ but only $C^k$ for some finite $k$ if $p\not\in \N$. This can again lead to violations of non-degeneracy conditions for bifurcations as in Example \eqref{ex:bif1}. In particular, as shown in \cite{KuehnScaleSN}, we will not be able to conclude quantitative universal scaling laws near bifurcations if we do not restrict the functions $F_{i,k}$ to be sufficiently smooth. Note that this phenomenon is again non-generic for a sufficiently large parameter space. The second key observation for elasticities relates to the 
form of $f_{i,k}$. For \eqref{eq:pow_ns} we find
\be
\label{eq:exp_p}
f_{i,k,x}=\frac{pX^*}{X^*-B}
\ee
which depends on the equilibrium point location $X^*$. The best way to interpret \eqref{eq:exp_p} is asymptotic knowledge about $X^*$; for example, we have 
\benn
\begin{array}{lrl}
X^*\gg 1, \quad B=O(1) & \Rightarrow &f_{i,k,x}\approx p,\\
X^*\approx B, \quad X^*=O(1) & \Rightarrow& f_{i,k,x}\gg 1,\\
0<X^*\ll 1, \quad B=O(1) &\Rightarrow & f_{i,k,x}\approx 0.\\
\end{array} 
\eenn 
where $O(1)$ indicates a fixed constant independent of any asymptotic limits of $X^*$.
\end{ex}

The last example shows that it can be beneficial to restrict the class of functions considered in a generalized model to a certain class, e.g. smooth functions, polynomials or certain functionals particular to the application area \cite{Gross1}. 

The previous three examples illustrate the main aspects of the genericity question for generalized models:

\begin{enumerate}
 \item We can check for a given decomposition and given function spaces whether the non-degeneracy conditions for 
a bifurcation are satisfied. 
 \item The checking depends crucially on the underlying mathematical modeling and how the functions and decomposition are chosen.
 \item Higher-order terms have to be taken into account that are not parametrized by scale parameters and elasticities; see also \cite{ZumsandeGross1}.
 \item Bifurcation and non-degeneracy conditions can usually be satisfied upon considering sufficiently large generalized parameter spaces. 
\end{enumerate}

When studying an uncertain system one can not necessary conclude that a bifurcation detected by generalized modeling is crossed transversally when a given (conventional) parameter is changed in the application. However, one can generically expect that bifurcations are transversal within the whole class of systems described by the generalized model.

Another issue that was already briefly considered in \eqref{ex:bif1} is the correspondence between generalized and specific models. If a specific system exhibits multistability one may ask how this is reflected in a generalized model.

\begin{ex}
\label{ex:bistable}
Consider the specific bi-stable system
\be
\label{eq:bistable}
X'=6-11X+6X^2-X^3
\ee 
which is easily seen to have three equilibria $X^*=1,2,3$ where $X^*=1,3$ are stable and $X^*=2$ is unstable. A generalized model encompassing \eqref{eq:bistable} as a special case is
\be
\label{eq:bistable_gen}
X'=-F_1(X)+F_2(X)-F_3(X).
\ee 
We can obtain the special case \eqref{eq:bistable} by $F_1(X)=11X-6$, $F_2(X)=6X^2$ and $F_3(X)=X^3$. Also the generalized parameter sets representing \eqref{eq:bistable} can easily be found
\be
\label{eq:res_bistable}
\begin{array}{lcl}
\text{if $X^*=1$} & \quad & (\beta_{1},\beta_2,\beta_3,f_{1,1,x},f_{1,2,x},f_{1,3,x})=(5,6,1,11/5,2,3) \\
\text{if $X^*=2$} & \quad & (\beta_{1},\beta_2,\beta_3,f_{1,1,x},f_{1,2,x},f_{1,3,x})=(8,12,4,11/8,2,3) \\
\text{if $X^*=3$} & \quad & (\beta_{1},\beta_2,\beta_3,f_{1,1,x},f_{1,2,x},f_{1,3,x})=(9,18,9,11/9,2,3) \\
\end{array}
\ee  
where we assumed without loss of generality for the single time scale that $\alpha=1$. The results \eqref{eq:res_bistable} illustrate that three different points in generalized parameter space represent the three local equilibria. Indeed, we have considered generalized modeling here as a purely local method (but see also Section \ref{ssec:nonlocal}) which also explains this representation of multistability.
\end{ex}

The previous example also raises a more general question about the correspondence between bifurcation results for specific and generalized models. In particular, one may want to relate the parameters $\mu\in\R^p$ in \eqref{eq:ODE} to the generalized parameters which can be expressed via a mapping
\be
\label{eq:mapM}
\{\mu\in\R^p\}\stackrel{M}{\longrightarrow}\{\alpha_i,\beta_{i,k},f_{i,k,x_j}\}_{i,j,k}.
\ee
Obviously this map $M$ depends on the specific functional forms of the vector field $F(x;\mu)$. Therefore we shall not discuss the parameter space mapping here. Figure \ref{fig:Stiefs2} illustrates the mapping $M$ for a model system from ecology \cite{StiefsvanVoornKooiFeudelGross}.\\  

\begin{figure}[htbp]
\centering
\psfrag{x}{$f_{1,1,x_1}$}
\psfrag{y}{$f_{2,1,x_1}$}
\psfrag{z}{$f_{1,2,x_1}$}
\psfrag{a}{(a)}
\psfrag{X2}{$X_2$}
\psfrag{b}{(b)}
\psfrag{mu}{$\mu_1$}
 \includegraphics[width=1\textwidth]{./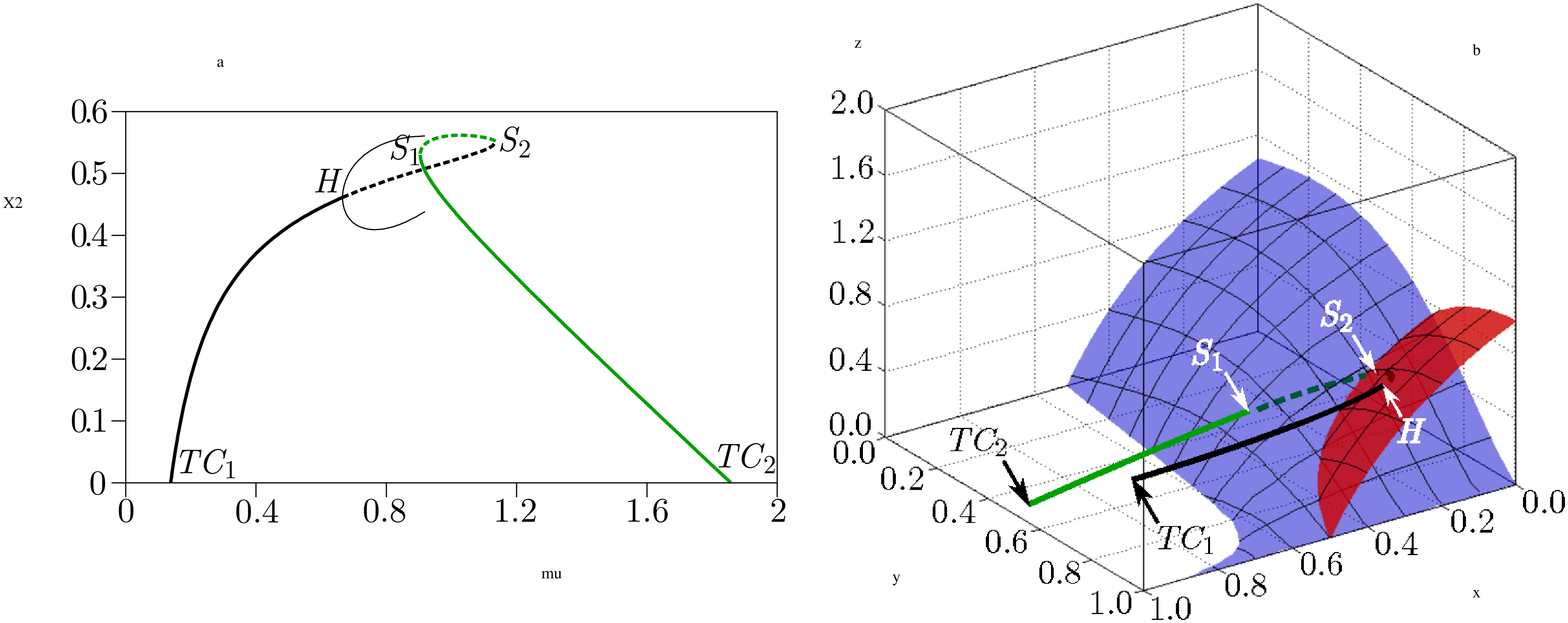}
\caption{\label{fig:Stiefs2}Example of the correspondence between specific and generalized bifurcation diagrams; figure adapted from \cite{StiefsvanVoornKooiFeudelGross}. (a) Bifurcation diagram for a specific model where $\mu_1$ is a parameter and $X_2$ a phase space variable. Thick lines denote equilibria with solid curves for stable and dashed curves for unstable equilibria. The thin lines mark the maximum and minimum $X_2$ coordinates of a limit cycle generated via the supercritical Hopf bifurcation $H$. Two transcritical bifurcation $TC_{1,2}$ and two saddle-node bifurcations $S_{1,2}$ are indicated as well. (b) Generalized parameter space bifurcation diagram where the two surfaces indicate Hopf (red) and saddle-node (blue) bifurcations. The curve represents the diagram from (a) obtained via a mapping \eqref{eq:mapM}. To calculate the curve one follows the same procedure as in Example \ref{ex:bistable}.}
\end{figure}

A very important conclusion of all previous examples is that generalized models produce information that ``scales with the input''. The more knowledge from the modeling process is available, the more detailed information about the dynamics can be derived.\\ 

We end this section by mentioning an important practical issue. We have to address how to find bifurcations and their associated varieties (such as surfaces, lines, curves, etc.) in parameter space. A direct method calculates the eigenvalues 
and uses iteration to find the zeros of real parts. An indirect method employs so-called test functions that vanish once a certain type of eigenvalue crossing occurs; see \cite{Govaerts} or \cite{Kuznetsov} for an overview of different test functions and references to the literature. We note that analytical methods such as the method of resultants \cite{GuckenheimerMyers1,GuckenheimerMyers2} have also been employed successfully in the context of generalized modeling \cite{GrossFeudel} using computer algebra \cite{GrossFeudel2}; having an explicit formula for the bifurcation loci can be beneficial for visualization \cite{StiefsGrossSteuerFeudel}. 

\section{Beyond ODEs}
\label{sec:beyond}

Although we presented the ideas of generalized modeling only in the context of ODEs so far, it is evident that their scope is much broader. Recall that the key ideas are:

\begin{itemize}
 \item There exist unknown functional forms in the model.
 \item Group the different parts of the vector field into gain and loss terms.
 \item Introduce a normalizing coordinate change for an equilibrium point.
 \item Re-scale the gain and loss terms and introduce scale parameters.
 \item Linearize at the new equilibrium $x=1$ and introduce elasticities.
 \item Interpret the generalized parameters and assign suitable ranges.
 \item Use tools from bifurcation analysis to capture the dynamics. 
\end{itemize}

These ideas carry over naturally into a much wider setup than just ODEs. Generalized models for discrete dynamical systems (iterated maps) are discussed in \cite{Karrasch}. Many other mathematical evolution equations are very similar to ODEs. In particular, the notion of equilibrium point, coordinate changes or re-scaling as well as linearized analysis carry over. In this section we shall demonstrate this observation for certain classes of partial, functional and stochastic differential equations in Sections \ref{ssec:PDE}-\ref{ssec:SDE}. In Section \ref{ssec:nonlocal} we provide a brief overview on extensions to nonlocal dynamics.

\subsection{Partial Differential Equations}
\label{ssec:PDE}

Generalized modelling for partial differential equations has first been developed for reaction-diffusion equations in the context of spatial predator-prey models \cite{GrossBaurmannFeudelBlasius,BaurmannGrossFeudel}. A more general class of partial differential equations (PDEs) that can be analyzed by dynamical systems techniques is given by (semilinear) parabolic equations. Let $(x,t)\in \Omega \times \R^+$ where $\Omega\subset \R^n$ is a domain and let $u=u(\cdot,t)\in X$ for all $t\geq 0$ where $X$ is a suitable Banach space e.g. $X=L^p(\Omega)$; for details see \cite{Henry}. Consider the abstract evolution equation
\be
\label{eq:PDE}
\frac{\partial u}{\partial t}+Au=F(x,t,u;\mu)
\ee 
where $F:\Omega \times \R^+ \times X\times \R^p \ra \R^n$ and $A$ is a sectorial (differential) operator on $X$ \cite{Henry}. A concrete example for \eqref{eq:PDE} are initial boundary value problems for reaction-diffusion PDEs \cite{Jost} of the form 
\be
\label{eq:PDE1}
\begin{array}{rcll}
\frac{\partial u}{\partial t}-\mathcal{D}\Delta u&=& F(x,t,u;\mu) & \text{for $x\in \Omega$, $t> 0$}\\
u(x,t)&=&g(x,t) & \text{for $x\in \partial\Omega$, $t>0$}\\
u(x,0)&=&h(x) & \text{for $x\in \partial\Omega$}\\
\end{array}
\ee  
for $g:\Omega\times \R^+\ra \R^n$, $h:\Omega \ra \R^n$, $\Delta$ is the Laplacian and $\mathcal{D}$ is a diagonal matrix called the diffusion matrix with non-negative elements. We shall restrict our presentation of generalized models here to \eqref{eq:PDE1} but remark that the ideas naturally extend to many other equations within the class \eqref{eq:PDE}. Assume that $F$ can again be decomposed into gain and loss terms 
\benn
F_i(u,x,t)=\sum_{k=1}^{K_i}a_{i,k}F_{i,k}(u,x,t).
\eenn
where we omit the parameters $\mu\in\R^p$ again for notational convenience. Note that this assumption is very natural in the context of reaction-diffusion systems since $F$ represents the reaction terms. For example, if we think of a chemical reaction then gain terms would be terms that increase a concentration while loss terms decrease it. Suppose \eqref{eq:PDE1} there exists a constant function $u^*$ such that 
\benn
u^*(x,t)=(u_1^*,\ldots,u_n^*),\qquad,  \qquad u^*_i\in\R, \quad u^*_i\neq 0,
\eenn
for all $t>0$. Then $u^*$ is a space-time homogeneous equilibrium solution to \eqref{eq:PDE1}. We can apply the normalizing coordinate change
\benn
v_i:=\frac{u_i}{u_i^*}, \qquad \text{for $i\in\{1,2,\ldots,n\}$.}
\eenn 
Then the reaction-diffusion system \eqref{eq:PDE1} transforms to
\be
\label{eq:PDE2}
\begin{array}{rcll}
\frac{\partial v_i}{\partial t}-\alpha_i\tilde{\mathcal{D}}_{ii}\Delta v_i&=& 
\frac{F_{i}(x,t,v_1u_1^*,\ldots,v_nu_n^*)}{u_i^*}=:\alpha_i\sum_{k=1}^{K_i}a_{i,k}
\beta_{i,k} f_{i,k}(x,t,v), & \text{$x\in \Omega$, $t> 0$}\\
v_i(x,t)&=&\frac{g_i(x,t)}{u_i^*}=:g_i^*(x,t), & \text{$x\in \partial\Omega$, $t>0$}\\
v_i(x,0)&=&\frac{h_i(x)}{u_i^*}=:h_i^*(x), & \text{$x\in \partial\Omega$}\\
\end{array}
\ee 
where the usual generalized modeling definitions as in \eqref{eq:ODE_scaled2} are used and the diffusion matrix has been rescaled $\tilde{\mathcal{D}}_{ii}=\mathcal{D}_{ii}/\alpha_i$. Then \eqref{eq:PDE2} has a space-time homogeneous equilibrium at $v=(1,1,\ldots,1)=:1$. Now all linearization techniques at $v=1$ for \eqref{eq:PDE2} can make use of the interpretation of scale parameters and elasticities as before. As a typical example one can consider the Turing-Hopf mechanism \cite{Jost}. Let $J(1)$ denote the Jacobian as in \eqref{eq:Jac_gen}. If $J(1)$ has only eigenvalues $\lambda=\lambda(k)$ with $\Re(\lambda)<0$ then the equilibrium $v=1$ is stable as a solution to $v'=J(1)v$. The Turing-Hopf bifurcation requires that some eigenvalue of the Laplacian has $\Re(\lambda(k))>0$ for some $k\geq 1$ which results in a spatial instability and associated pattern formation. Explicit conditions for this scenario to occur can then be derived in terms of the generalized parameters and the rescaled diffusion coefficients \cite{BaurmannGrossFeudel}.

\subsection{Delay Differential Equations}

Generalized models for delay equations have first been considered in the context of coupled oscillators in \cite{HoefenerSethiaGross}. We are going to consider a slightly more general delay differential equation (DDE) with constant delays 
$t_l$, $l\in\{1,2,\ldots,m\}$, given by
\be
\label{eq:DDE}
X'=F(X(t),X(t-\tau_1),X(t-\tau_2),\ldots,X(t-\tau_m);\mu)
\ee
where $F:\R^{(m+1)n}\times \R^p\ra \R^n$ and we abbreviate $X(t-\tau_l)$ as $X^{\tau_l}$. As for PDEs, we remark that \eqref{eq:DDE} only presents a subclass of DDEs as it is non-neutral with constant delays  \cite{HaleLunel} and we expect generalized modeling to apply for many more DDEs than just \eqref{eq:DDE}. Suppose there exists an equilibrium point $X=X^*$ so that
\benn
F(X^*,X^*,\ldots,X^*;\mu)=0.
\eenn
Applying the usual generalized modeling procedure we end up with  
\be
\label{eq:scaled_DDE}
x_i'=\alpha_i\left(\sum_{k=1}^{K_i}a_{i,k}\beta_{i,k}f_{i,k}(x)\right)
\ee
where the scale parameters are defined as usual and 
\benn 
f_{i,k}(x)=\frac{F_{i,k}(X^*\cdot x,X^*\cdot x^{\tau_1},\ldots,X^*\cdot x^{\tau_n})}{X_i^*}.
\eenn
The linearized equation for \eqref{eq:scaled_DDE} around $x=1$ is
\be
\label{eq:DDE_lin}
v'=\sum_{l=0}^mA_lv^{\tau_l}
\ee
where the $n\times n$ matrices $A_l$ consists of the rows
\benn
\alpha_i\left(\sum_{k=1}^{K_i}a_{i,k}\beta_{i,k}\sum_{l=0}^m (D_lf_{i,k})(1)\right)
\eenn
and where $D_l$ denotes the total derivative of $f_{i,k}$ with respect to the $l$-th argument; note that we have employed the convention $\tau_0=0$ so that $l=0$ denotes the first argument. The characteristic equation associated to \eqref{eq:DDE_lin} is obtained by assuming an exponential solution of the form $e^{\lambda t}$ and is given by
\be
\label{eq:DDE_char}
\det\left(\lambda \text{Id}-\sum_{l=0}^m e^{-\lambda \tau_l} A_l \right)=0.
\ee
In contrast to ODEs, we see that \eqref{eq:DDE_char} is a transcendental equation which can have an infinite number of solutions $\lambda$. It is known that if $\Re(\lambda)<0$ for every solution then the solution is stable. Bifurcation analysis for \eqref{eq:DDE_char} can then be carried out using numerical \cite{EngelborghsLuzyaninaSamaey} or analytical \cite{Erneux} methods. The generalized parameters are used as bifurcation parameters in this context.

\subsection{Stochastic Differential Equations}
\label{ssec:SDE}

Consider a system of stochastic differential equations (SDEs) \cite{Oksendal}
\be
\label{eq:SDE}
dX=F(X;\mu)dt+G(X;\mu)dW
\ee
where $W=W(t)=(W_1(t),W_2(t),\ldots,W_k(t))^T$ is a $k$-dimensional Brownian motion, $F:\R^n\ra \R^n$, $G:\R^n\ra \R^{n\times k}$ is a matrix-valued function. The normalizing coordinate change \eqref{eq:rescale} can be applied to \eqref{eq:SDE} for a deterministic equilibrium $F(X^*)=0$ with $X_i^*\neq 0$ for all $i\in\{1,\ldots,n\}$. Since the coordinate change is linear the It\^{o} formula \cite{Gardiner,Kallenberg} reduces to the standard chain rule and we get 
\benn
dx_i=\frac{F_i(X_1x_1,\ldots,X_nx_n;\mu)}{X_i^*}dt+
\frac{1}{X_i^*}G_i(X_1x_1,\ldots,X_nx_n;\mu)dW, 
\quad \text{for $i\in\{1,2,\ldots,n\}$}
\eenn
where $G_i$ indicates the $i$-th row of $G$. The scale and exponent parameters for the deterministic drift terms $F_i(X;\mu)/X_i^*$ can be defined as in Section \ref{sec:gm}. We can also consider a normalized matrix-valued function for the diffusion term where each row is given by 
\benn
g_i(x;\mu):=\frac{G_i(X_1x_1,\ldots,X_nx_n;\mu)}{X_i^*}.
\eenn
Then we can still formally linearize \eqref{eq:SDE} at $x=1$ and obtain to lowest order
\be
\label{eq:SDE1}
d\xi=\left(\left.\frac{\partial }{\partial x_j} 
\sum_{k=1}^{K_i} f_{i,k}(x;\mu)\right|_{x=1}\right)_{i,j} \xi dt
+g(1;\mu)dW
\ee
where $\xi=\xi(t)$ now solves an SDE with linear drift term and constant diffusion. There are multiple possibilities on how to develop a ``stochastic bifurcation theory'' \cite{HorsthemkeLefever,ArnoldSDE}. Therefore we shall not discuss generalized modeling for SDEs in any more detail. However, we expect generalized models for SDEs to work for bifurcation and stability analysis as well.

\subsection{Nonlocal Generalized Models}
\label{ssec:nonlocal}

So far, we have presented a systematic approach to generalized models to analyze local dynamics. It is a natural question to ask whether the approach can be used to analyze invariant sets beyond equilibria such as periodic orbits, homoclinic orbits or tori.\\

A first important point to notice in this regard is that generalized models already provide a way to locate certain global orbits via local bifurcations. For example, it is well-known \cite{Kuznetsov}, under certain assumptions on normal form coefficients, that
\begin{itemize}
 \item Codimension one Hopf bifurcations imply the existence of a periodic orbit,
 \item Codimension two Bogdanov-Takens points imply the existence of a homoclinic orbit,
 \item Codimension two Hopf-Hopf bifurcations imply the existence of an invariant torus.
\end{itemize}

In fact, under suitable conditions on equilibrium point for a saddle-focus homoclinic orbit or via the break-up of tori near Hopf-Hopf bifurcation one can infer the existence of chaotic invariant sets which are obviously global phenomena. This approach has been used for generalized models in several applications \cite{GrossEbenhoehFeudel,GrossFeudel1,Stiefs,StiefsVenturinoFeudel,ZumsandeGross,ZumsandeStiefsSiegmundGross}; see also Figure \ref{fig:Stiefs}.\\

However, this approach cannot capture global bifurcations such as saddle-nodes of periodic orbits. Recent work of Kuehn and Gross \cite{KuehnGross} shows how to extend generalized modeling to periodic orbits for the predator-prey system \eqref{eq:gm_local}. The main problem is that the generalized parameter $\beta_{i,k}$ and $f_{i,k,x_j}$ become time-dependent functions $\beta_{i,k}(t)$ and $f_{i,k,x_j}(t)$. This changes the algebraic structure of the generalized model and introduces a so-called moduli flow constraint on the scale functions $\beta_{i,k}(t)$. The detailed description of this approach is beyond the scope of this paper and we refer the interested reader to \cite{KuehnGross}.

\bibliographystyle{plain}
\bibliography{../my_refs}

\begin{thebibliography}{10}

\bibitem{ArnoldSDE}
L.~Arnold.
\newblock {\em Random Dynamical Systems}.
\newblock Springer, 2003.

\bibitem{ArnoldGeomODE}
V.I. Arnold.
\newblock {\em Geometrical Methods in the Theory of Ordinary Differential
  Equations}.
\newblock Springer, 1983.

\bibitem{AufderheideRudolfGross}
H.~Aufderheide, L.~Rudolf, and T.~Gross.
\newblock Mesoscale symmetries explain dynamical equivalence of networks.
\newblock in preparation, 2011.

\bibitem{BaurmannGrossFeudel}
M.~Baurmann, T.~Gross, and U.~Feudel.
\newblock Instabilities in sptially extended predator-prey systems:
  spatio-temporal patterns in the neighbourhood of {Turing-Hopf} bifurcations.
\newblock {\em J. Theor. Bio.}, 245:220--229, 2007.

\bibitem{Dumortier}
F.~Dumortier.
\newblock {\em Singularities of Vector Fields}.
\newblock IPAM, 1978.

\bibitem{Dumortier1}
F.~Dumortier.
\newblock Techniques in the theory of local bifurcations: Blow-up, normal
  forms, nilpotent bifurcations, singular perturbations.
\newblock In D.~Schlomiuk, editor, {\em Bifurcations and Periodic Orbits of
  Vector Fields}, pages 19--73. Kluwer, 1993.

\bibitem{EngelborghsLuzyaninaSamaey}
K.~Engelborghs, T.~Luzyanina, and G.~Samaey.
\newblock {\em {DDE-BIFTOOL v. 2.00: a Matlab package for bifurcation analysis
  of delay differential equations}}.
\newblock KU Leuven, 2000.

\bibitem{VoitSavageau}
E.O.Voit and M.A. Savageau.
\newblock Equivalence between {S}-systems and {Volterra} systems.
\newblock {\em Math. Biosci.}, 78:47--55, 1986.

\bibitem{Erneux}
T.~Erneux.
\newblock {\em Applied Delay Differential Equations}.
\newblock Springer, 2009.

\bibitem{Fell}
D.A. Fell.
\newblock Metabolic control analysis: a survey of its theoretical and
  experimental development.
\newblock {\em Biochem. J.}, 286:313--330, 1992.

\bibitem{Friedman}
M.~Friedman.
\newblock {\em Price Theory}.
\newblock Transaction Publishers, 2007.

\bibitem{Gardiner}
C.~Gardiner.
\newblock {\em Stochastic Methods}.
\newblock Springer, 4th edition, 2009.

\bibitem{GehrmannDrossel}
E.~Gehrmann and B.~Drossel.
\newblock Boolean versus continuous dynamics on simple two-gene modules.
\newblock {\em Phys. Rev. E}, 82:046120, 2010.

\bibitem{GolubvanLoan}
G.H. Golub and C.~van Loan.
\newblock {\em Matrix Computations}.
\newblock Johns Hopkins University Press, 1996.

\bibitem{Govaerts}
W.F. Govaerts.
\newblock {\em Numerical Methods for Bifurcations of Dynamical Equilibria}.
\newblock SIAM, 1987.

\bibitem{Gross1}
T.~Gross.
\newblock {\em Population Dynamics: General Results from Local Analysis}.
\newblock PhD thesis, Universit{\"{a}}t Oldenburg, Oldenburg, Germany, 2004.

\bibitem{GrossBaurmannFeudelBlasius}
T.~Gross, M.~Baurmann, U.~Feudel, and B.~Blasius.
\newblock Generalized models - a new tool for the investigation of ecological
  systems.
\newblock In B.~Blasius, J.~Kurths, and L.~Stone, editors, {\em Complex
  population dynamics: nonlinear modeling in ecology, epidemiology and
  genetics}, pages 21--48. World Scientific, 2006.

\bibitem{GrossEbenhoehFeudel1}
T.~Gross, W.~Ebenh{\"{o}}h, and U.~Feudel.
\newblock Enrichment and foodchain stability: the impact of different
  functional forms.
\newblock {\em J. Theor. Bio.}, 227(3):349--358, 2004.

\bibitem{GrossEbenhoehFeudel}
T.~Gross, W.~Ebenh{\"{o}}h, and U.~Feudel.
\newblock Long food chains are in general chaotic.
\newblock {\em Oikos}, 109(1):133--155, 2005.

\bibitem{GrossFeudel2}
T.~Gross and U.~Feudel.
\newblock Analytical search for bifurcation surfaces in parameter space.
\newblock {\em Physica D}, 195(3):292--302, 2004.

\bibitem{GrossFeudel}
T.~Gross and U.~Feudel.
\newblock Generalized models as an universal approach to the analysis of
  nonlinear dynamical systems.
\newblock {\em Phys. Rev. E}, 73:016205--14, 2006.

\bibitem{GrossFeudel1}
T.~Gross and U.~Feudel.
\newblock Local dynamical equivalence of certain food webs.
\newblock {\em Ocean Dynamics}, 59(2):417--427, 2009.

\bibitem{GrossRudolfLevinDieckmann}
T.~Gross, L.~Rudolf, S.~A. Levin, and U.~Dieckmann.
\newblock Generalized models reveal stabilizing factors in food webs.
\newblock {\em Science}, 325:747--750, 2009.

\bibitem{Guckenheimer14}
J.~Guckenheimer.
\newblock Computer simulation and beyond - for the 21st century.
\newblock {\em Notices of the Am. Math. Soc.}, 45:1120--1123, 1998.

\bibitem{GH}
J.~Guckenheimer and P.~Holmes.
\newblock {\em Nonlinear Oscillations, Dynamical Systems, and Bifurcations of
  Vector Fields}.
\newblock Springer, 1983.

\bibitem{GuckenheimerMyers2}
J.~Guckenheimer and M.~Myers.
\newblock {Computing Hopf Bifurcations II}.
\newblock {\em SIAM J. Sci. Comput.}, 17(6):1275--1301, 1996.

\bibitem{GuckenheimerMyers1}
J.~Guckenheimer, M.~Myers, and B.~Sturmfels.
\newblock {Computing Hopf Bifurcations I}.
\newblock {\em SIAM J. Numer. Anal.}, 34(1):1--21, 1997.

\bibitem{Hale}
J.K. Hale.
\newblock {\em Ordinary Differential Equations}.
\newblock Dover, 2009.

\bibitem{HaleLunel}
J.K. Hale and S.M.~Verduyn Lunel.
\newblock {\em Introduction to Functional Differential Equations}.
\newblock Springer, 1993.

\bibitem{Hartman}
P.~Hartman.
\newblock {\em Ordinary Differential Equations}.
\newblock SIAM, 2nd edition, 2002.

\bibitem{HeinrichRapoportRapoport}
R.~Heinrich, S.M. Rapoport, and T.A. Rapoport.
\newblock Metabolic regulation and mathematical models.
\newblock {\em Prog. Biophys. Molec. Biol.}, 32:1--82, 1977.

\bibitem{HeinrichSchuster}
R.~Heinrich and S.~Schuster.
\newblock {\em The Regulation of Cellular Systems}.
\newblock Chapman \& Hall, 1996.

\bibitem{Henry}
D.~Henry.
\newblock {\em Geometric Theory of Semilinear Parabolic Equations}.
\newblock Springer, 1981.

\bibitem{HoefenerSethiaGross}
J.M. H{\"{o}}fener, G.C. Sethia, and T.~Gross.
\newblock Stability and resonance in networks of delay-coupled delay
  oscillators.
\newblock {\em Europhysics Letters}, pages 1--5, 2011.
\newblock accepted.

\bibitem{HorsthemkeLefever}
W.~Horsthemke and R.~Lefever.
\newblock {\em Noise-Induced Transitions}.
\newblock Springer, 2006.

\bibitem{Jost}
J.~Jost.
\newblock {\em Partial Differential Equations}.
\newblock Springer, 2006.

\bibitem{KacserBurns}
H.~Kacser and J.A. Burns.
\newblock The control of flux.
\newblock In {\em Control of biological processes}, pages 65--104. CUP, 1973.

\bibitem{Kallenberg}
Olav Kallenberg.
\newblock {\em Foundations of Modern Probability - $2^{nd}$ edition}.
\newblock Springer, 2002.

\bibitem{Karrasch}
D.~Karrasch.
\newblock {\"{U}ber Verallgemeinerte Modelle}.
\newblock Diplomarbeit, TU Dresden, 2009.

\bibitem{KuehnScaleSN}
C.~Kuehn.
\newblock {Scaling of saddle-node bifurcations: degeneracies and rapid
  quantitative changes}.
\newblock {\em J. Phys. A: Math. and Theor.}, 42(4):(045101), 2009.

\bibitem{KuehnCanLya}
C.~Kuehn.
\newblock {From first Lyapunov coefficients to maximal canards}.
\newblock {\em Int. J. Bif. and Chaos}, 20(5):1467--1475, 2010.

\bibitem{KuehnGross}
C.~Kuehn and T.~Gross.
\newblock Nonlocal generalized models of predator-prey systems.
\newblock {\em arXiv:1105.3662}, pages 1--30, 2011.

\bibitem{Kuznetsov}
Yu.A. Kuznetsov.
\newblock {\em Elements of Applied Bifurcation Theory - $3^{rd}$ edition}.
\newblock Springer, 2004.

\bibitem{LuSing}
Yung-Chen Lu.
\newblock {\em Singularity Theory and an Introduction to Catastrophe Theory}.
\newblock Springer, 1976.

\bibitem{Oksendal}
B.~{\O}ksendal.
\newblock {\em Stochastic Differential Equations}.
\newblock Springer, 2003.

\bibitem{Reder}
C.~Reder.
\newblock Metabolic control theory: a structural approach.
\newblock {\em J. Theor. Biol.}, 135(2):175--201, 1988.

\bibitem{ReznikSegre}
E.~Reznik and D.~Segr{\'{e}}.
\newblock On the stability of metabolic cycles.
\newblock {\em J. Theor. Biol.}, 266:536--549, 2010.

\bibitem{SavageauVoit}
M.A. Savageau and E.O.Voit.
\newblock Recasting nonlinear differential equations as {S}-systems: a
  canonical nonlinear form.
\newblock {\em Math. Biosci.}, 87(1):83--115, 1987.

\bibitem{SmithSzidarovszkyKarnavasBahil}
E.D. Smith, F.~Szidarovszky, W.J. Karnavas, and A.T. Bahil.
\newblock Sensitivity analysis, a powerful system validation technique.
\newblock {\em The Open Cybernetics and Systemics Journal}, 2:39--56, 2008.

\bibitem{SteuerGrossSelbigBlasius}
R.~Steuer, T.~Gross, J.~Selbig, and B.~Blasius.
\newblock Structural kinetic modeling of metabolic networks.
\newblock {\em Proc. Natl. Acad. Sci.}, 103(32):11868--11873, 2006.

\bibitem{Steueretal}
R.~Steuer, A.~Nunes Nesi, A.R. Fernie, T.~Gross, B.~Blasius, and J.~Selbig.
\newblock From structure to dynamics of metabolic pathways.
\newblock {\em Bioinformatics}, 23(11):1378--1385, 2007.

\bibitem{Stiefs}
D.~Stiefs.
\newblock {\em Relating generalized and specific modeling in population
  dynamical systems}.
\newblock PhD thesis, Universit{\"{a}}t Oldenburg, Oldenburg, Germany, 2009.

\bibitem{StiefsGrossSteuerFeudel}
D.~Stiefs, T.~Gross, R.~Steuer, and U.~Feudel.
\newblock Computation and visualization of bifurcation surfaces.
\newblock {\em Int. J. Bif. Chaos}, 18(8):2191--2206, 2008.

\bibitem{StiefsvanVoornKooiFeudelGross}
D.~Stiefs, G.A.K. van Voorn, B.W. Kooi, U.~Feudel, and T.~Gross.
\newblock Food quality in producer-grazer models.
\newblock {\em Am. Nat.}, 176:367--380, 2010.

\bibitem{StiefsVenturinoFeudel}
D.~Stiefs, E.~Venturino, and U.~Feudel.
\newblock Evidence of chaos in eco-epidemic models.
\newblock {\em Math. Biosci. Engineer.}, 6(4):855--871, 2009.

\bibitem{vanVoornStiefsGrossKooiFeudelKooijman}
G.A.K. van Voorn, D.~Stiefs, T.~Gross, B.W. Kooi, U.~Feudel, and S.A.L.M.
  Kooijman.
\newblock Stabilization due to predator interference.
\newblock {\em Math. Biosci. Eng.}, 5(3):567--583, 2008.

\bibitem{Wiggins}
S.~Wiggins.
\newblock {\em Introduction to Applied Nonlinear Dynamical Systems and Chaos -
  2nd edition}.
\newblock Springer, 2003.

\bibitem{YeakelStiefsNovakGross}
J.D. Yeakel, D.~Stiefs, M.~Novak, and T.~Gross.
\newblock Generalized modeling of ecological population dynamics.
\newblock {\em Theor. Ecol.}, 4(2), 2011.

\bibitem{ZumsandeGross}
M.~Zumsande and T.~Gross.
\newblock Bifurcations and chaos in the {MAPK} signaling cascade.
\newblock {\em J. Theor. Bio.}, 265(3):481--491, 2010.

\bibitem{ZumsandeGross1}
M.~Zumsande and T.~Gross.
\newblock Higher-order generalized models.
\newblock {\em in preparation}, 2011.

\bibitem{ZumsandeStiefsSiegmundGross}
M.~Zumsande, D.~Stiefs, S.~Siegmund, and T.~Gross.
\newblock General analysis of mathematical models for bone remodeling.
\newblock {\em Bone}, 48(4):910--917, 2011.

\end{thebibliography}

\end{document}